\numberwithin{equation}{section}
\def\Xint#1{\mathchoice 
  {\XXint\displaystyle\textstyle{#1}}% 
  {\XXint\textstyle\scriptstyle{#1}}% 
  {\XXint\scriptstyle\scriptscriptstyle{#1}}% 
  {\XXint\scriptscriptstyle\scriptscriptstyle{#1}}% 
  \!\int} 
\def\XXint#1#2#3{{\setbox0=\hbox{$#1{#2#3}{\int}$} 
  \vcenter{\hbox{$#2#3$}}\kern-.5\wd0}} 
\def\-int{\Xint -}
\newcommand{\R}{\mathbb{R}}
\newcommand{\N}{\mathbb{N}}
\newcommand{\h}{H^{s}(\R^{N})}
\DeclareMathOperator{\dive}{div}
\DeclareMathOperator{\F}{\mathcal{F}}
\DeclareMathOperator{\erre}{\mathcal{R}}
\DeclareMathOperator{\C}{\mathcal{C}}
\newtheorem{prop}{Proposition}[section]
\newtheorem{lem}{Lemma}[section]
\newtheorem{thm}{Theorem}[section]
\begin{document}

\title[multiplicity result]{A multiplicity result for a fractional Kirchhoff equation in $\R^{N}$ with a general nonlinearity}

\author[V. Ambrosio]{Vincenzo Ambrosio}
\address{Vincenzo Ambrosio\hfill\break\indent 
Dipartimento di Scienze Pure e Applicate (DiSPeA),\hfill\break\indent
Universit\`a degli Studi di Urbino `Carlo Bo'\hfill\break\indent
Piazza della Repubblica, 13\hfill\break\indent
61029 Urbino (Pesaro e Urbino, Italy)}
\email{vincenzo.ambrosio@uniurb.it}

\author[T. Isernia]{Teresa Isernia}
\address{Teresa Isernia\hfill\break\indent
Dipartimento di Matematica e Applicazioni `Renato Caccioppoli' \hfill\break\indent
Universit\`a degli Studi di Napoli `Federico II' \hfill\break\indent
Via Cintia 1, 80126 Napoli\hfill\break\indent
Italy}
\email{teresa.isernia@unina.it}

%\date{}
\keywords{Stationary Kirchhoff equation; Fractional Laplacian; variational methods}
\subjclass[2010]{35A15, 35R11, 49J35}

\begin{abstract}
In this paper we deal with the following fractional Kirchhoff equation
\begin{equation*}
\left(p+q(1-s) \iint_{\R^{2N}} \frac{|u(x)- u(y)|^{2}}{|x-y|^{N+2s}} \, dx\,dy \right)(-\Delta)^{s}u = g(u) \mbox{ in } \R^{N},  
\end{equation*}
where $s\in (0,1)$, $N\geq 2$, $p>0$, $q$ is a small positive parameter and $g: \R\rightarrow \R$ is an odd function satisfying Berestycki-Lions type assumptions. 
%By using minimax arguments, we prove the existence of multiple solutions for the above equation, provided that $q$ is sufficiently small.
%The existence of multiple solutions for the above equation is provided by using minimax arguments. 
By using minimax arguments, we establish a multiplicity result for the above equation, provided that $q$ is sufficiently small.  
\end{abstract}

\maketitle

\section{Introduction}

\noindent
In this paper we study the multiplicity of weak solutions to the following nonlinear fractional Kirchhoff equation 
\begin{equation}\label{K}
\left(p+q(1-s) \iint_{\R^{2N}} \frac{|u(x)- u(y)|^{2}}{|x-y|^{N+2s}} \, dx\, dy \right)(-\Delta)^{s}u = g(u)\, \mbox{ in } \R^{N} 
\end{equation}
where $s\in (0,1)$, $N\geq 2$, $p>0$, $q$ is a small positive parameter and $g$ is a nonlinearity which satisfies suitable assumptions. 
The operator $(-\Delta)^{s}$ is the fractional Laplacian which may be defined for a function $u$ belonging to the Schwartz space $ \mathcal{S}(\R^{N})$ of rapidly decaying functions as
$$
(-\Delta)^{s}u(x)=C_{N,s} \,P. V. \int_{\R^{N}} \frac{u(x)-u(y)}{|x-y|^{N+2s}} dy,  \quad x\in \R^{N}.
$$
The symbol P.V. stands for the Cauchy principal value and $C_{N,s}$ is a normalizing constant; see \cite{DPV} for more details.\\
When $s\rightarrow 1^{-}$ in \eqref{K}, from Theorem $2$ (and Corollary $2$) in \cite{BBM}, we can see that \eqref{K} becomes the following Kirchhoff equation
\begin{equation}\label{K1}
-\left(p+q \int_{\R^{N}} |\nabla u(x)|^{2} \, dx \right) \Delta u = g(u)\, \mbox{ in } \R^{N}, 
\end{equation}
which has been extensively studied in the last decade. \\
The equation (\ref{K1}) is related to the stationary analogue of the Kirchhoff equation
\begin{equation*}
u_{tt} - \left( p + q \int_{\Omega} |\nabla u(x)|^{2} \, dx \right) \Delta u=g(x,u)
\end{equation*}
with $\Omega \subset \R^{N}$ bounded domain, which was proposed by Kirchhoff in 1883 \cite{Kirchhoff} as an extension of the classical D'Alembert's wave equation 
\begin{equation}
\rho\, u_{tt} - \left( \frac{P_{0}}{h} + \frac{E}{2L} \int_{0}^{L} |u_{x}|^{2} \, dx \right) u_{xx}^{2}=g(x,u)
\end{equation}
for free vibrations of elastic strings. Kirchhoff's model takes into account the changes in length of the string produced by transverse vibrations. Here, $L$ is the length of the string, $h$ is the area of the cross section, $E$ is the Young modulus of the material, $\rho$ is the mass density and $P_{0}$ is the initial tension.
The early classical studies dedicated to Kirchhoff equations were given by Bernstein \cite{Bernstein} and Pohozaev \cite{Pohozaev}. However, equation (\ref{K1}) received much attention only after the paper by Lions \cite{Lions1}, 
where a functional analysis approach was proposed to attack it. For more recent results concerning Kirchhoff-type equations we refer to \cite{ACF, ACM, AuPu1, AUtPucSal, Fig, LS, MZ, PZ}.\\
% where an abstract framework to the problem was proposed. For more recent results see \cite{ACF, AuPu1, AuPu2, ADP, LS}\\
On the other hand, a great attention has been recently focused on the study of nonlinear fractional Kirchhoff problem. 
In \cite{FV}, Fiscella and Valdinoci proposed an interesting interpretation of Kirchhoff's equation in the fractional setting, by proving the existence of nonnegative solutions for a critical Kirchhoff type problem in a bounded domain of $\R^{N}$. Subsequently, in \cite{AuFiPu} the authors investigated the existence and the asymptotic behavior of nonnegative solutions for a class of stationary Kirchhoff problems driven by a fractional integro-differential operator and involving a critical nonlinearity. Pucci and Saldi in \cite{PuSa} established the existence and multiplicity of nontrivial nonnegative entire solutions for a Kirchhoff type eigenvalue problem in $\R^{N}$ involving a critical nonlinearity and the fractional Laplacian. 
%deal with the question of the existence and multiplicity of nontrivial nonnegative entire solutions of a Kirchhoff eigenvalue problem, involving critical nonlinearities and nonlocal elliptic operators, but only in the non-degenerate setting. 
More recently, in \cite{FMS} has been proved the existence of infinitely many weak solutions for a Cauchy problem for a fractional Kirchhoff-type equation by using the genus theory of Krasnosel'skii; see also \cite{AuPu4, MMTZ, MR, MT, MV, PXZ} for related problems.

\smallskip

\noindent
Motivated by the above papers, in this work we aim to study the multiplicity of weak solutions to the fractional Kirchhoff equation (\ref{K}) with $q$ small parameter and $g$ is a general subcritical nonlinearity. 
More precisely, we suppose that $g:\R \rightarrow \R$ satisfies Berestycki-Lions type assumptions \cite{BL1, BL2}, that is:
\begin{compactenum}[($g_1$)]
\item $g\in \C^{1}(\R, \R)$ and odd;
\item $\displaystyle{-\infty < \liminf_{t\rightarrow 0^{+}} \frac{g(t)}{t} \leq \limsup_{t\rightarrow 0^{+}} \frac{g(t)}{t}=-m<0}$;
\item $\displaystyle{\lim_{t\rightarrow \pm \infty}\frac{|g(t)|}{|t|^{2^{*}_{s}-1}}= 0}$, where $\displaystyle{2^{*}_{s}= \frac{2N}{N-2s}}$;
\item there exists $\zeta>0$ such that $\displaystyle{G(\zeta):= \int_{0}^{\zeta} g(t) \, d t>0}$.
\end{compactenum}
Let us recall that when $q=0$ and $p=1$ in (\ref{K}), in \cite{A, CW} has been established the existence and multiplicity of radially symmetric solutions to the fractional scalar field problem 
\begin{equation}\label{FBL}
(-\Delta)^{s}u=g(u) \mbox{ in } \R^{N}.
\end{equation}
Now, we aim to study a generalization of (\ref{FBL}), and we look for weak solutions to (\ref{K}) with $q>0$ sufficiently small. \\ 
Our main result is the following:
\begin{thm}\label{thm1.2}
Let us suppose that ($g_1$), ($g_2$), ($g_3$) and ($g_4$) are satisfied. Then, for any $h\in \N$ there exists $q(h)>0$ such that for any $0<q<q(h)$ equation (\ref{K}) admits at least $h$ couples of solutions in $\h$ with radial symmetry. 
\end{thm}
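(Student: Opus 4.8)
The plan is to treat \eqref{K} variationally and to regard the Kirchhoff term as a perturbation, switched on by the small parameter $q$, of the fractional scalar field problem \eqref{FBL}. First I associate to \eqref{K} the functional
\[
I_q(u)=\frac{p}{2}[u]_s^2+\frac{q(1-s)}{4}[u]_s^4-\int_{\R^N}G(u)\,dx,\qquad [u]_s^2:=\iint_{\R^{2N}}\frac{|u(x)-u(y)|^2}{|x-y|^{N+2s}}\,dx\,dy,
\]
defined (up to the constant $C_{N,s}$) on $\h$; its critical points are the weak solutions of \eqref{K}. By ($g_1$) the map $I_q$ is even, and since \eqref{K} is invariant under rotations I would restrict $I_q$ to the radial subspace $H^s_{rad}(\R^N)$, where the compact embedding into $L^r(\R^N)$ holds for every $2<r<2^{*}_{s}$ (this is where $N\geq 2$ is used); by the principle of symmetric criticality, radial critical points are genuine solutions. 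The growth controls ($g_2$)--($g_3$) guarantee that $I_q$ is of class $\C^{1}$, and the negative mass $-m$ in ($g_2$) makes $-\int G(u)\,dx$ positive for $\|u\|$ small, so that $I_q$ has a strict local minimum at the origin.

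The obstruction is that ($g_2$)--($g_4$) are Berestycki--Lions conditions, with no Ambrosetti--Rabinowitz hypothesis, so $I_q$ does not enjoy the usual Palais--Smale compactness. To remedy this I would introduce the augmented functional on $\R\times H^s_{rad}(\R^N)$ associated with the dilation $u\mapsto u(\cdot/e^{\theta})$,
\[
\widetilde I_q(\theta,u)=\frac{p}{2}e^{(N-2s)\theta}[u]_s^2+\frac{q(1-s)}{4}e^{2(N-2s)\theta}[u]_s^4-e^{N\theta}\int_{\R^N}G(u)\,dx,
\]
which is even in $u$ and whose critical points carry, through the $\partial_\theta$--equation, the Pohozaev identity that $I_q$ lacks. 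On $\widetilde I_q$ I would run a symmetric (Krasnoselskii genus) minimax: for each $k\in\N$ I take classes $\Gamma_k$ of odd continuous maps from $k$--dimensional symmetric sets and set $b_k(q)=\inf_{\gamma\in\Gamma_k}\max\,\widetilde I_q\circ\gamma$. Using ($g_2$) one shows a uniform positive lower bound on small symmetric spheres, so that $0<b_1(q)\le b_2(q)\le\cdots$; using a profile with $G>0$ supplied by ($g_4$) together with the dilation, one produces, for $q$ small, comparison sets on which $\widetilde I_q$ becomes negative at intermediate scales, which fixes the minimax geometry and the finiteness of the $b_k(q)$.

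The decisive step is a Palais--Smale--Pohozaev condition for $\widetilde I_q$ at the levels $b_k(q)$: along an almost critical sequence the $\partial_\theta$--component controls the dilation parameter, the radial compact embedding absorbs the subcritical nonlinearity by ($g_3$), and the negative mass in ($g_2$) prevents the $L^2$--norm from vanishing, so the sequence is bounded and, up to a subsequence and a shift in $\theta$, converges strongly to a nontrivial radial critical point. Here the nonlocal Kirchhoff term demands care, since $e^{2(N-2s)\theta}[u]_s^4$ scales faster than the $p$--term whenever $N>4s$ and thus perturbs both the Pohozaev balance and the energy estimates; controlling this is exactly what forces $q$ to be small, and the admissible threshold deteriorates as the number of desired levels grows, which is the origin of the dependence $q=q(h)$.

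Finally, to count solutions I would compare with \eqref{FBL}: the values $b_k(0)$ are the symmetric critical levels produced for the pure scalar field equation in \cite{A, CW}, and a continuity argument gives $b_k(q)\to b_k(0)$ as $q\to 0^{+}$, uniformly for $k\le h$. Choosing $q(h)$ so small that the first $h$ levels $b_1(q),\dots,b_h(q)$ remain in the range where the Palais--Smale--Pohozaev condition and the deformation lemma apply, the evenness of $\widetilde I_q$ together with the genus multiplicity then yields at least $h$ distinct pairs $\pm u$ of radial solutions of \eqref{K}. The main difficulty throughout is reconciling the anomalous scaling of the nonlocal Kirchhoff term with the Pohozaev--based compactness, and it is precisely this reconciliation that the smallness of $q$ makes possible.
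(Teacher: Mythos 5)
Your overall architecture is the same as the paper's: the even functional $I_q$ restricted to $H^{s}_{\rm rad}(\R^{N})$, the scaling-augmented functional $\widetilde{I}_q(\theta,u)=I_q(u(\cdot/e^{\theta}))$ in the spirit of Hirata--Ikoma--Tanaka, symmetric minimax levels $b_k$, and Pohozaev-type compactness. The genuine gap is in your decisive step, the ``Palais--Smale--Pohozaev condition'': you assert that almost-critical sequences are bounded because $q$ is small, but smallness of $q$ alone cannot deliver this. Along a sequence $(\theta_j,u_j)$ with $\theta_j\to 0$, $\widetilde{I}_q(\theta_j,u_j)\to b_k$ and $\partial_\theta\widetilde{I}_q(\theta_j,u_j)\to 0$, the combination $N\widetilde{I}_q-\partial_\theta\widetilde{I}_q$ (in which the $G$-term cancels) gives
\begin{equation*}
ps\,[u_j]_{\h}^{2}-\frac{q(1-s)(N-4s)}{4}\,[u_j]_{\h}^{4}=Nb_k+o_j(1),
\end{equation*}
so whenever $N>4s$ (always true for $N\geq 4$, and for $N=2,3$ whenever $s<N/4$) the Kirchhoff term enters this identity with a \emph{negative} coefficient and quartic growth. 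For every fixed $q>0$ the relation $ps\,t^{2}-c_q t^{4}=Nb_k+o_j(1)$ admits arbitrarily large solutions $t$, so no bound on $[u_j]_{\h}$ follows, no matter how small $q$ is. You flag this anomalous scaling yourself, but ``controlling this is exactly what forces $q$ to be small'' is not an argument; the identity simply does not close.

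The missing idea is the truncation of Jeanjean--Le Coz, which is the paper's key device. One works with $\F_{q}^{\Lambda}(u)=\frac{1}{2}[u]_{\h}^{2}+q\,\xi_{\Lambda}(u)\erre(u)-\int_{\R^{N}}G(u)\,dx$, where $\erre(u)=\frac{1-s}{4}[u]_{\h}^{4}$ and $\xi_{\Lambda}(u)=\chi\left(\|u\|_{\h}^{2}/\Lambda^{2}\right)$ is a cut-off vanishing for $\|u\|_{\h}^{2}\geq 2\Lambda^{2}$. Because the cut-off kills the Kirchhoff term outside a fixed ball, every Kirchhoff contribution to the Pohozaev/energy identities is bounded by $q\left(C_{1}+C_{2}\Lambda^{\delta}\right)$ uniformly in $j$; one then fixes $\Lambda=\Lambda_n$ large \emph{first}, chooses $q<q_n(\Lambda_n)$ \emph{afterwards}, and a contradiction argument yields $\|u_j\|_{\h}\leq\Lambda_n$. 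Only at that point is the truncation inactive, so the sequence is a genuine bounded Palais--Smale-type sequence for $\F_q$ itself, and its limit (obtained via the radial compact embedding and the splitting $g=g_1-g_2$) is a nontrivial critical point at level $b_n$. This two-step choice of $\Lambda$ and then $q$, level by level, is precisely where $q(h)$ comes from. Your closing continuity argument $b_k(q)\to b_k(0)$, while plausible (since $\erre\geq 0$ forces $b_k(q)\geq b_k(0)$, and fixed comparison maps give the reverse inequality in the limit), does nothing to restore compactness, and the paper needs no such comparison with the limit problem: it only uses $b_n\to+\infty$ to select $h$ distinct levels.
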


%When $s=1$, Theorem \ref{thm1.2} can be seen as the non-local counterpart of the result in \cite{ADP}, obtained by combing the Mountain Pass approach introduced in \cite{HIT} with the truncation argument of \cite{JL}. Here we follow the ideas in \cite{ADP} to show the multiplicity result above stated. 

A common approach to deal with nonlinear problems involving the fractional Laplacian, has been proposed by Caffarelli and Silvestre in \cite{CS}. It consists to realize $(-\Delta)^{s}$  as an operator that maps a Dirichlet boundary condition to a Neumann-type
condition via an extension problem on the upper half-space $\R^{N+1}_{+}$.
More precisely, for $u\in \h$ one considers the problem 
\begin{equation*}
\left\{
\begin{array}{ll}
-\dive (y^{1-2s} \nabla v)=0 &\mbox{ in } \R^{N+1}_{+}\\
v(x, 0)=u(x) &\mbox{ on } \partial \R^{N+1}_{+}
\end{array}
\right.
\end{equation*}
from where the fractional Laplacian is obtained as 
\begin{equation*}
(-\Delta)^{s} u(x)= -\kappa_{s} \lim_{y\rightarrow 0^{+}} y^{1-2s} v_{y}(x, y),
\end{equation*}
where $\kappa_{s}$ is a suitable constant and the equality holds in distributional sense. \\
In this paper we investigate the problem (\ref{K}) directly in $\h$ in order to adapt the techniques developed in the classical case $s=1$.\\
More precisely, we follow the ideas in \cite{ADP}, and by combining the Mountain Pass approach introduced in \cite{HIT} with the truncation argument of \cite{JL}, we prove the multiplicity result above stated. 
\smallskip

\noindent
The paper is organized as follows: in Sec. $2$ some notations and preliminaries are given, including lemmas that are required to obtain our main Theorem; in Sec. $3$ we establish an abstract critical point result and finally in Sec. $4$ we provide the proof of Theorem \ref{thm1.2}.

\section{Preliminaries}
\noindent
%We recall some useful facts of the fractional Sobolev spaces; for more details we refer to \cite{DPV}.\\
For any $s\in (0,1)$ we define the fractional Sobolev spaces
$$
H^{s}(\R^{N})= \left\{u\in L^{2}(\R^{N}) : \frac{|u(x)-u(y)|}{|x-y|^{\frac{N+2s}{2}}} \in L^{2}(\R^{2N}) \right \}
$$
endowed with the natural norm 
$$
\|u\|_{H^{s}(\R^{N})} = \sqrt{[u]_{H^{s}(\R^{N})}^{2} + \|u\|_{L^{2}(\R^{N})}^{2}}
$$
where the so-called Gagliardo seminorm of $u$ is given by
$$
[u]^{2}_{H^{s}(\R^{N})} =\iint_{\R^{2N}} \frac{|u(x)-u(y)|^{2}}{|x-y|^{N+2s}} \, dx \, dy. 
$$

\noindent
For the reader's convenience, we review the main embedding result for this class of fractional Sobolev spaces. 
\begin{thm}\cite{DPV}\label{sobolevthm}
Let $s\in (0,1)$ and $N>2s$. Then $H^{s}(\R^{N})$ is continuously embedded in $L^{q}(\R^{N})$ for any $q\in [2, 2^{*}_{s}]$ and compactly in $L^{q}_{\rm loc}(\R^{N})$ for any $q\in [2, 2^{*}_{s})$. 
\end{thm}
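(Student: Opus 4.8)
The statement splits into a continuous embedding and a compact one, and the plan is to treat them in that order, since the compactness will rest on the continuous estimate together with a translation (modulus-of-continuity) bound. The heart of the matter is the critical Gagliardo--Nirenberg--Sobolev inequality
\begin{equation*}
\|u\|_{L^{2^{*}_{s}}(\R^{N})}\le C\,[u]_{H^{s}(\R^{N})},\qquad 2^{*}_{s}=\frac{2N}{N-2s},
\end{equation*}
valid for $N>2s$; once this is in hand, the full range $q\in[2,2^{*}_{s}]$ and the local compactness both follow by soft arguments.

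To prove the critical inequality I would pass through the Fourier/Riesz-potential picture. First, by Plancherel one has $[u]_{H^{s}(\R^{N})}^{2}=c_{N,s}\int_{\R^{N}}|\xi|^{2s}|\hat u(\xi)|^{2}\,d\xi$ for a positive dimensional constant $c_{N,s}$, obtained by Fourier-transforming the difference quotient and computing the resulting radial integral; thus $[u]_{H^{s}(\R^{N})}$ is equivalent to $\|(-\Delta)^{s/2}u\|_{L^{2}(\R^{N})}$. Writing $f=(-\Delta)^{s/2}u$, so that $u=I_{s}f$ is the Riesz potential $u(x)=\gamma_{N,s}\int_{\R^{N}}|x-y|^{-(N-s)}f(y)\,dy$, the Hardy--Littlewood--Sobolev inequality with $p=2$ and $\frac{1}{2^{*}_{s}}=\frac12-\frac{s}{N}$ gives $\|u\|_{L^{2^{*}_{s}}}=\|I_{s}f\|_{L^{2^{*}_{s}}}\le C\|f\|_{L^{2}}\le C'[u]_{H^{s}(\R^{N})}$; here $N>2s$ is exactly the condition making $2^{*}_{s}$ finite and admissible in HLS. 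For intermediate $q\in(2,2^{*}_{s})$ I would interpolate: writing $\frac1q=\frac\theta2+\frac{1-\theta}{2^{*}_{s}}$ with $\theta\in(0,1)$, Hölder gives $\|u\|_{L^{q}}\le\|u\|_{L^{2}}^{\theta}\|u\|_{L^{2^{*}_{s}}}^{1-\theta}\le C\|u\|_{\h}$, using $\|u\|_{L^{2}}\le\|u\|_{\h}$ at the lower endpoint. This yields the continuous embedding on the whole interval $[2,2^{*}_{s}]$.

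For the compact embedding into $L^{q}_{\rm loc}$ with $q<2^{*}_{s}$, fix a bounded domain $\Omega$ and a sequence $(u_{n})$ bounded in $\h$; I would invoke the Riesz--Fréchet--Kolmogorov compactness criterion, which on a bounded set reduces precompactness in $L^{q}(\Omega)$ to boundedness in $L^{q}$ together with uniform $L^{q}$-equicontinuity under translations, $\sup_{n}\|u_{n}(\cdot+h)-u_{n}\|_{L^{q}(\Omega)}\to0$ as $h\to0$. The first condition is the continuous embedding just proved. For the second, the key is the $L^{2}$ translation estimate: by Plancherel, $\|u(\cdot+h)-u\|_{L^{2}}^{2}=\int_{\R^{N}}|\hat u(\xi)|^{2}\,|e^{ih\cdot\xi}-1|^{2}\,d\xi$, and the elementary bound $|e^{ih\cdot\xi}-1|^{2}\le\min(4,|h|^{2}|\xi|^{2})\le 4|h|^{2s}|\xi|^{2s}$, valid precisely because $0<s<1$, gives $\|u(\cdot+h)-u\|_{L^{2}}\le C|h|^{s}[u]_{H^{s}(\R^{N})}$. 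Interpolating this with the uniform $L^{2^{*}_{s}}$ bound exactly as above yields $\|u_{n}(\cdot+h)-u_{n}\|_{L^{q}}\le C|h|^{s\theta}\|u_{n}\|_{\h}\to0$ uniformly in $n$. Hence $(u_{n})$ is precompact in $L^{q}(\Omega)$, and since $\Omega$ was an arbitrary bounded domain, the embedding into $L^{q}_{\rm loc}(\R^{N})$ is compact.

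The main obstacle is the critical endpoint inequality, i.e. the Hardy--Littlewood--Sobolev step, which is the only genuinely nontrivial ingredient; the Plancherel identities, the interpolation, the translation bound, and the Riesz--Fréchet--Kolmogorov criterion are all soft. Should one wish to avoid HLS entirely, an alternative is the elementary dyadic/covering argument bounding $\|u\|_{L^{2^{*}_{s}}}$ directly by the Gagliardo seminorm without Fourier analysis: more self-contained, but technically heavier. I would also note that the loss of smallness in $|h|$ as $\theta\to0$ is exactly why compactness fails at the critical exponent $q=2^{*}_{s}$, consistent with the statement.
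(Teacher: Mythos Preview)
Your argument is correct and follows one of the standard routes: the Plancherel identity linking the Gagliardo seminorm to $\|(-\Delta)^{s/2}u\|_{L^{2}}$, then Hardy--Littlewood--Sobolev for the critical embedding, interpolation for the intermediate exponents, and Riesz--Fr\'echet--Kolmogorov with the Fourier translation bound $|e^{ih\cdot\xi}-1|^{2}\le 4|h|^{2s}|\xi|^{2s}$ for local compactness. Each step is sound.

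However, there is nothing to compare against: the paper does not prove this theorem. It is stated with an attribution to \cite{DPV} and used as a black box throughout (for instance in Lemma~\ref{lem2.1} and Proposition~\ref{prop2.4}). So your proposal is not reproducing the paper's proof but rather supplying one where the authors chose to cite the literature. If anything, the argument in \cite{DPV} for the critical inequality is slightly different in flavor---it proceeds via a direct pointwise/level-set estimate on the Gagliardo quotient rather than through the Riesz potential and HLS---but both lead to the same conclusion, and your Fourier-side approach is arguably cleaner for someone already comfortable with the symbol calculus. The compactness part via Kolmogorov's criterion is likewise standard and matches what one finds in the reference.
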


\noindent
Let us introduce 
$$
H^{s}_{\rm rad}(\R^{N})=\left\{u\in H^{s}(\R^{N}): u(x)=u(|x|)\right\}
$$
the space of radial functions in $H^{s}(\R^{N})$. For this space it holds the following compactness result due to Lions \cite{Lions}:
%We recall the following compactness result due to Lions \cite{Lions}:
\begin{thm}\label{Lions}\cite{Lions}
Let $s\in (0,1)$ and $N\geq 2$. Then $H^{s}_{\rm rad}(\R^{N})$ is compactly embedded in $L^{q}(\R^{N})$ for any $q\in (2, 2^{*}_{s})$.
\end{thm}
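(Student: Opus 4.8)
The plan is to prove directly the sequential characterization of compactness: every sequence $(u_{n})$ bounded in $H^{s}_{\rm rad}(\R^{N})$ admits a subsequence converging in $L^{q}(\R^{N})$ for each fixed $q\in(2,2^{*}_{s})$. Since $H^{s}(\R^{N})$ is a Hilbert space, I first extract a subsequence with $u_{n}\rightharpoonup u$ in $H^{s}(\R^{N})$; being a weak limit of radial functions, $u$ is itself radial. Writing $v_{n}:=u_{n}-u$, the sequence $(v_{n})$ is radial, bounded in $H^{s}(\R^{N})$, and $v_{n}\rightharpoonup 0$, so it is enough to show $v_{n}\to 0$ in $L^{q}(\R^{N})$.

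I would then split the norm into an interior and an exterior contribution. For the interior, Theorem \ref{sobolevthm} gives the compact embedding into $L^{q}_{\rm loc}(\R^{N})$ for $q\in[2,2^{*}_{s})$, hence $v_{n}\to 0$ in $L^{q}(B_{R})$ for every fixed $R>0$. The exterior is controlled by a radial tail estimate: for every $q\in(2,2^{*}_{s})$ there is $C=C(N,s,q)>0$ such that every radial $v\in H^{s}(\R^{N})$ satisfies
\begin{equation*}
\int_{|x|\ge R}|v|^{q}\,dx\le C\,R^{-\theta}\,\|v\|_{H^{s}(\R^{N})}^{q},\qquad \theta=\frac{(N-1)(q-2)}{2}>0,\quad R\ge 1.
\end{equation*}
Granting this, the boundedness of $(v_{n})$ yields $\int_{|x|\ge R}|v_{n}|^{q}\le C'R^{-\theta}$ uniformly in $n$; combined with the interior convergence this gives $\limsup_{n}\|v_{n}\|_{L^{q}(\R^{N})}^{q}\le C'R^{-\theta}$ for every $R$, and letting $R\to\infty$ proves $v_{n}\to 0$ in $L^{q}(\R^{N})$, i.e. the theorem.

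To establish the tail estimate I would decompose $\{|x|\ge 1\}$ into the dyadic annuli $A_{k}=\{2^{k}\le|x|<2^{k+1}\}$ and estimate each one through the radial profile $\phi(r)=v(x)$, $r=|x|$. The crucial point is a \emph{sharp} radial reduction on each annulus: all three relevant quantities carry the common angular volume factor, namely $\|v\|_{L^{p}(A_{k})}^{p}\sim 2^{k(N-1)}\|\phi\|_{L^{p}(I_{k})}^{p}$ for $p\in\{2,q\}$, while integrating out the angular variables in the Gagliardo kernel gives $[v]_{H^{s}(A_{k})}^{2}\sim 2^{k(N-1)}[\phi]_{H^{s}(I_{k})}^{2}$ on $I_{k}=[2^{k},2^{k+1}]$. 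Feeding this into the one dimensional subcritical embedding on $I_{k}$ (legitimate since the one dimensional critical exponent $2/(1-2s)$ strictly exceeds $2^{*}_{s}$ for $N\ge 2$, with the obvious modification when $s\ge 1/2$) yields the per-annulus bound $\|v\|_{L^{q}(A_{k})}\le C\,2^{k(N-1)(1/q-1/2)}\|v\|_{H^{s}(A_{k})}$. Since $\sum_{k}\|v\|_{H^{s}(A_{k})}^{2}\le\|v\|_{H^{s}(\R^{N})}^{2}$ (the localized seminorms are integrals over the disjoint sets $A_{k}\times A_{k}\subset\R^{2N}$), raising to the power $q\ge 2$, pulling out $\|v\|_{H^{s}(\R^{N})}^{q-2}$, and summing the decaying geometric factor over $2^{k}\gtrsim R$ produces exactly the claimed $R^{-\theta}$ bound.

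The main obstacle is the nonlocality of the Gagliardo seminorm, which prevents a naive transcription of the classical $s=1$ Strauss argument (there the pointwise decay $|v(x)|\lesssim|x|^{-(N-1)/2}$ comes from a one dimensional fundamental theorem of calculus computation, which has no fractional analogue, and indeed no pointwise bound can hold for small $s$). The delicate step is therefore the sharp radial reduction of the \emph{double} integral on each annulus: one must compute the asymptotics of the angular kernel $\iint_{S^{N-1}\times S^{N-1}}|r\sigma-r'\sigma'|^{-(N+2s)}\,d\sigma\,d\sigma'\sim 2^{-k(N-1)}|r-r'|^{-(1+2s)}$ for $r,r'\sim 2^{k}$, which is what converts the $N$ dimensional energy into the one dimensional weighted energy with the correct power of the radius; a cruder rescaling of the whole annulus to a fixed one loses this and only yields boundedness, not decay. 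Finally, the exclusion of the endpoints is structural: at $q=2$ the exponent $\theta$ vanishes, so the tail need not be small, while at $q=2^{*}_{s}$ the interior compactness of Theorem \ref{sobolevthm} fails (dilation concentration at the origin); hence compactness holds precisely on the open interval $(2,2^{*}_{s})$.
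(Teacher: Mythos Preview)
The paper does not give its own proof of this theorem; it is quoted verbatim from Lions and used as a black box. So there is no ``paper proof'' to compare against, and your proposal supplies an argument where the paper supplies none.

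Your strategy is sound and, once the details are filled in, correct. The interior part is immediate from Theorem~\ref{sobolevthm}. For the tail, the dyadic annular decomposition together with the radial reduction is a legitimate route; the two claims that carry the argument are (i) the two--sided angular kernel asymptotic
\[
\iint_{S^{N-1}\times S^{N-1}}|r\sigma-r'\sigma'|^{-(N+2s)}\,d\sigma\,d\sigma'\ \asymp\ (rr')^{-(N-1)/2}\,|r-r'|^{-(1+2s)}\quad (r,r'\in I_{k}),
\]
which one obtains by the substitution $u=2rr'(1-\sigma\!\cdot\!\sigma')/|r-r'|^{2}$ in the polar integral, and (ii) the per--annulus embedding $\|\phi\|_{L^{q}(I_{k})}\le C\|\phi\|_{H^{s}(I_{k})}$ with $C$ \emph{independent of} $k$. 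Point (ii) is the place where a reader might balk: a naive rescaling of $I_{k}$ to $[1,2]$ produces a constant growing like $2^{k(1/q-1/2+s)}$ in front of the seminorm. The uniformity is nevertheless true because the full inhomogeneous $H^{s}$ norm is used: extend $\phi$ from $I_{k}$ to $\R$ by a reflection/Stein--type operator whose $H^{s}(I_{k})\to H^{s}(\R)$ bound depends only on the local geometry near the endpoints and is therefore uniform for intervals of length $\ge 1$, and then apply the one--dimensional embedding $H^{s}(\R)\hookrightarrow L^{q}(\R)$, valid for $q\le 2/(1-2s)$ when $s<1/2$ (and you have checked $2^{*}_{s}<2/(1-2s)$ for $N\ge 2$) and for all $q<\infty$ when $s\ge 1/2$. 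With (i) and (ii) in hand, the summation step you describe indeed gives the tail bound with exponent $\theta=(N-1)(q-2)/2$.

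In short: the proposal is correct; it simply goes beyond what the paper does, since the paper relies on the cited reference rather than proving the embedding. If you turn this sketch into a full proof, make the uniform extension argument for (ii) explicit, as that is the step most likely to be questioned.
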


\noindent
Finally, we recall the following fundamental compactness results:
\begin{lem}\cite{ADP, BL1}\label{prop2.5}
Let $P$ and $Q:\R\rightarrow\R$ be a continuous functions satisfying
\begin{equation*}
\lim_{t\rightarrow +\infty} \frac{P(t)}{Q(t)}=0,
\end{equation*}
$\{v_{j}\}_{j\in \N}$, $v$ and $w$ be measurable functions from $\R^{N}$ to $\R$, with $w$ bounded, such that 
\begin{align*}
&\sup_{j\in \N} \int_{\R^{N}} |Q(v_{j}(x)) w| \, dx <+\infty, \\
&P(v_{j}(x))\rightarrow v(x) \mbox{ a.e. in } \R^{N}. 
\end{align*}
Then $\|(P(v_{j})-v)w\|_{L^{1}(\mathcal{B})}\rightarrow 0$, for any bounded Borel set $\mathcal{B}$. \\
Moreover, if we have also
\begin{equation*}
\lim_{t\rightarrow 0} \frac{P(t)}{Q(t)}=0,
\end{equation*}
and
\begin{equation*}
\lim_{|x| \rightarrow \infty} \sup_{j\in \N} |v_{j}(x)|=0,
\end{equation*}
then $\|(P(v_{j})-v)w\|_{L^{1}(\R^{N})}\rightarrow 0$.
\end{lem}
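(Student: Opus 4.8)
The plan is to prove the two assertions in turn: first the local convergence on an arbitrary bounded Borel set, and then to bootstrap it to the global $L^{1}$ convergence using the two extra hypotheses. Throughout I would abbreviate $W:=\|w\|_{L^{\infty}(\R^{N})}$ and $K:=\sup_{j\in\N}\int_{\R^{N}}|Q(v_{j})w|\,dx<+\infty$. Since a bounded Borel set $\mathcal{B}$ has finite Lebesgue measure and $P(v_{j})w\to vw$ a.e.\ on $\mathcal{B}$ by hypothesis, the local statement would follow from Vitali's convergence theorem as soon as the family $\{P(v_{j})w\}_{j}$ is shown to be uniformly integrable on $\mathcal{B}$.

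The device I would use to get uniform integrability is an elementary splitting of $P$ based on the decay of $P/Q$. Fixing $\varepsilon>0$, I would choose $T_{\varepsilon}>0$ so that $|P(t)|\le \varepsilon|Q(t)|$ once $t$ is large, and set $G_{\varepsilon}(t):=(|P(t)|-\varepsilon|Q(t)|)^{+}$; being continuous and vanishing for $t\ge T_{\varepsilon}$, this $G_{\varepsilon}$ is bounded by some $C_{\varepsilon}$, whence $|P(t)|\le \varepsilon|Q(t)|+C_{\varepsilon}$ for every relevant $t$. For any measurable $E\subseteq\mathcal{B}$ this yields
\[
\int_{E} |P(v_{j})w|\,dx \le \varepsilon\int_{\R^{N}}|Q(v_{j})w|\,dx + C_{\varepsilon} W\,|E| \le \varepsilon K + C_{\varepsilon} W\,|E|,
\]
with no dependence on $j$. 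Given $\eta>0$, I would first fix $\varepsilon$ with $\varepsilon K<\eta/2$ and then $\delta>0$ with $C_{\varepsilon} W\delta<\eta/2$, so that $|E|<\delta$ forces $\sup_{j}\int_{E}|P(v_{j})w|<\eta$; this is precisely uniform integrability, and Vitali's theorem then gives $vw\in L^{1}(\mathcal{B})$ together with $\|(P(v_{j})-v)w\|_{L^{1}(\mathcal{B})}\to 0$.

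For the global statement I would split $\R^{N}=B_{R}\cup(\R^{N}\setminus B_{R})$ and control the tail uniformly in $j$ using the two new hypotheses. Given $\varepsilon>0$, the condition $P(t)/Q(t)\to0$ as $t\to0$ provides $\rho>0$ with $|P(t)|\le\varepsilon|Q(t)|$ for $|t|\le\rho$, while $\sup_{j}|v_{j}(x)|\to0$ as $|x|\to\infty$ provides $R>0$ with $|v_{j}(x)|\le\rho$ for all $|x|\ge R$ and all $j$; hence $|P(v_{j})w|\le\varepsilon|Q(v_{j})w|$ on $\R^{N}\setminus B_{R}$ and $\int_{\R^{N}\setminus B_{R}}|P(v_{j})w|\,dx\le\varepsilon K$ uniformly in $j$. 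Passing to the a.e.\ limit and applying Fatou's lemma gives $\int_{\R^{N}\setminus B_{R}}|vw|\,dx\le\varepsilon K$ as well, so the tail of $\int_{\R^{N}}|(P(v_{j})-v)w|\,dx$ is at most $2\varepsilon K$ for every $j$; since the integral over the ball $B_{R}$ tends to $0$ by the first part, letting $j\to\infty$ and then $\varepsilon\to0$ finishes the argument.

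The hard part will be the uniform-integrability estimate in the first step: everything rests on separating $P(v_{j})$ into a piece dominated by $\varepsilon|Q(v_{j})|$ --- absorbed by the $j$-uniform bound $K$ --- and a bounded remainder supported where the argument is not large, whose contribution depends only on $|E|$. Arranging for both error terms to be small simultaneously dictates the order of the quantifiers (choose $\varepsilon$ first, then $\delta$), and this is the genuine crux; once uniform integrability is established, the appeal to Vitali's theorem and the Fatou-type tail bound are routine.
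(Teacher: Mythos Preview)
The paper does not supply a proof of this lemma: it is stated with a citation to \cite{ADP, BL1} (this is the Strauss--Berestycki--Lions compactness lemma) and used as a black box. So there is no in-paper argument to compare against.

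Your route---uniform integrability from the splitting $|P(t)|\le \varepsilon|Q(t)|+C_{\varepsilon}$, then Vitali on bounded sets, then a tail estimate via the smallness of $P/Q$ near $0$ combined with the uniform decay of $|v_{j}|$ at infinity---is exactly the classical proof, and the logic is sound. One technical point you should clean up: the paper records only the one-sided hypothesis $\lim_{t\to+\infty}P(t)/Q(t)=0$, and with that alone your function $G_{\varepsilon}(t)=(|P(t)|-\varepsilon|Q(t)|)^{+}$ is \emph{not} guaranteed to be bounded, since vanishing for $t\ge T_{\varepsilon}$ says nothing about $t\to-\infty$. In the original sources the limit is taken as $|t|\to\infty$ (or the $v_{j}$ are assumed nonnegative), and that is what your argument actually uses; you should state this explicitly rather than inherit what appears to be a transcription slip in the paper.
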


\begin{lem}\cite{CW}\label{CW}
Let  $(X, \| \cdot \|)$ be a Banach space such that $X$ is embedded respectively continuously and compactly into $L^{q}(\R^{N})$ for $q\in [q_{1}, q_{2}]$ and $q\in (q_{1}, q_{2})$, where $q_{1}, q_{2}\in (0, \infty)$.
Assume that $\{v_{j}\}_{j\in \N}\subset X$, $v: \R^{N} \rightarrow \R$ is a measurable function and $P\in \C(\R, \R)$ is such that
\begin{compactenum}[(i)]
\item $\displaystyle{\lim_{|t|\rightarrow 0} \frac{P(t)}{|t|^{q_{1}}}=0}$, \\
\item $\displaystyle{\lim_{|t|\rightarrow \infty} \frac{P(t)}{|t|^{q_{2}}}=0}$,\\
\item $\displaystyle{\sup_{j\in \N} \|v_{j}\|<\infty}$,\\
\item $\displaystyle{\lim_{j \rightarrow \infty} P(v_{j}(x))=v(x)} \mbox{ for a.e. } x\in \R^{N}$.
\end{compactenum}
Then, up to a subsequence, we have
$$
\lim_{j\rightarrow \infty} \|P(v_{j})-v\|_{L^{1}(\R^{N})}=0.
$$
\end{lem}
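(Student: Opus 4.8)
The plan is to establish the stated $L^{1}$ convergence through a Vitali-type argument, invoking the compact embedding only to control the mass of $\{v_{j}\}$ at spatial infinity, while the endpoint exponents $q_{1},q_{2}$ — where no compactness is available — are tamed by an $\varepsilon$-splitting adapted to the decay and growth conditions (i)--(ii). First I would record two elementary consequences of the hypotheses. From the continuous embeddings $X\hookrightarrow L^{q_{1}}(\R^{N})$ and $X\hookrightarrow L^{q_{2}}(\R^{N})$ together with (iii), the sequence is bounded in both spaces, say $\sup_{j}\|v_{j}\|_{L^{q_{1}}}^{q_{1}}+\sup_{j}\|v_{j}\|_{L^{q_{2}}}^{q_{2}}\le C$. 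From (i)--(ii), for any $\varepsilon>0$ there exist $0<\delta<1<M$ with $|P(t)|\le\varepsilon|t|^{q_{1}}$ for $|t|\le\delta$ and $|P(t)|\le\varepsilon|t|^{q_{2}}$ for $|t|\ge M$, while $C_{0}:=\sup_{|t|\le M}|P(t)|<\infty$ by continuity. Fixing an interior exponent $q\in(q_{1},q_{2})$ and using the compact embedding $X\hookrightarrow\hookrightarrow L^{q}(\R^{N})$, I would pass to a subsequence along which $v_{j}\to v_{0}$ strongly in $L^{q}(\R^{N})$ for some $v_{0}$; in particular the family $\{|v_{j}|^{q}\}$ is tight, i.e. $\sup_{j}\int_{|x|>R}|v_{j}|^{q}\,dx\to 0$ as $R\to\infty$.

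The two key estimates are uniform integrability and tightness of $\{P(v_{j})\}$. For uniform integrability, on any measurable set $E$ I would split according to the three regimes $\{|v_{j}|\le\delta\}$, $\{|v_{j}|\ge M\}$ and $\{\delta\le|v_{j}|\le M\}$, obtaining $\int_{E}|P(v_{j})|\,dx\le 2\varepsilon C+C_{0}|E|$, so that choosing $\varepsilon$ small and then $|E|$ small makes the left-hand side arbitrarily small, uniformly in $j$. For tightness I would argue on $\{|x|>R\}$: the two tails again contribute at most $2\varepsilon C$, while on the intermediate set the lower bound $|v_{j}|\ge\delta$ gives $|\{\delta\le|v_{j}|\le M\}\cap\{|x|>R\}|\le\delta^{-q}\int_{|x|>R}|v_{j}|^{q}\,dx$, which is uniformly small for large $R$ by the tightness of $\{|v_{j}|^{q}\}$; hence $\sup_{j}\int_{|x|>R}|P(v_{j})|\,dx\to 0$ as $R\to\infty$. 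Moreover, the global bound $|P(t)|\le C'(|t|^{q_{1}}+|t|^{q_{2}})$ (a consequence of (i)--(ii) and continuity) shows $\{P(v_{j})\}$ is bounded in $L^{1}(\R^{N})$, so Fatou's lemma together with (iv) yields $v\in L^{1}(\R^{N})$.

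Finally I would combine these ingredients. Given $\eta>0$, the tightness just proved furnishes $R$ with $\sup_{j}\int_{|x|>R}|P(v_{j})|\,dx<\eta$, and after enlarging $R$ also $\int_{|x|>R}|v|\,dx<\eta$. On the ball $B_{R}$, which has finite measure, the a.e. convergence (iv) together with the uniform integrability established above gives $\int_{B_{R}}|P(v_{j})-v|\,dx\to 0$ by the Vitali convergence theorem; letting $j\to\infty$ and then $\eta\to 0$ produces $\|P(v_{j})-v\|_{L^{1}(\R^{N})}\to 0$. The main obstacle is precisely the absence of compactness at the endpoint exponents $q_{1}$ and $q_{2}$, and it is resolved by the $\varepsilon$-splitting above: the genuinely compact information (strong $L^{q}$ convergence for an interior $q$) is confined to the intermediate regime $\delta\le|v_{j}|\le M$, where $P$ is bounded, whereas the two tails are absorbed into an $\varepsilon$-small error controlled only by the $L^{q_{1}}$- and $L^{q_{2}}$-bounds.
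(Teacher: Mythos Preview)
Your argument is correct. The paper does not supply its own proof of this lemma: it is quoted verbatim as a known compactness result from \cite{CW}, so there is nothing to compare at the level of strategy. Your Vitali-type scheme --- using the interior compact embedding only to produce $L^{q}$-tightness of $\{v_{j}\}$, and handling the endpoint exponents through the $\varepsilon$-splitting into the three regimes $\{|v_{j}|\le\delta\}$, $\{\delta\le|v_{j}|\le M\}$, $\{|v_{j}|\ge M\}$ --- is a clean and standard route, and each step (uniform integrability, tightness of $\{P(v_{j})\}$, $v\in L^{1}$ via Fatou, Vitali on $B_{R}$ plus the tail estimate) is justified. The only point worth making explicit in a write-up is the order of quantifiers in the tightness step: one first fixes $\varepsilon$ (hence $\delta$, $M$, $C_{0}$) and only then sends $R\to\infty$, so that the factor $C_{0}\delta^{-q}$ is a harmless constant when invoking $\sup_{j}\int_{|x|>R}|v_{j}|^{q}\,dx\to 0$.
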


\section{A critical point result}
\noindent
In this section we provide an abstract multiplicity result which allows us to prove Theorem \ref{thm1.2}.
Let us introduce the following functional defined for $u\in H^{s}(\R^{N})$
\begin{equation}\label{2}
\F_{q}(u) = \frac{1}{2} [u]_{\h}^{2} + q \erre(u) - \int_{\R^{N}} G(u) \, dx, 
\end{equation}
where $q>0$ is a small parameter and $\erre:H^{s}(\R^{N}) \rightarrow \R$. \\
We suppose that 
$$
\erre=\sum_{i=1}^{k} \erre_{i}
$$ 
and, for each $i=1, \dots, k$ the functional $\erre_{i}$ satisfies
\begin{compactenum}[($\erre_1$)]
\item $\erre_{i}\in \C^{1}(H^{s}(\R^{N}), \R)$ is nonnegative and even;
\item there exists $\delta_{i}>0$ such that $\langle \erre_{i}'(u) , u \rangle\leq C \|u\|_{\h}^{\delta_{i}}$ for any $u\in H^{s}(\R^{N})$;
\item if $\{u_{j}\}_{j\in \N}\subset H^{s}(\R^{N})$ is weakly convergent to $u\in H^{s}(\R^{N})$, then 
$$
\limsup_{j\rightarrow \infty} \, \langle \erre_{i}'(u_{j}) , u-u_{j} \rangle \leq 0;
$$
\item there exist $\alpha_{i}, \beta_{i}\geq 0$ such that if $u\in H^{s}(\R^{N})$, $t>0$ and $u_{t}=u\left(\frac{\cdot}{t}\right)$, then
$$
\erre_{i}(u_{t}) = t^{\alpha_{i}} \erre_{i}(t^{\beta_{i}}u);
$$ 
\item $\erre_{i}$ is invariant under the action of the $N$-dimensional orthogonal group, i.e. $\erre_{i}(u(g\cdot))= \erre_{i}(u(\cdot))$ for every $g\in \mathcal{O}(N)$. 
\end{compactenum}
Let us observe that for any $u\in H^{s}(\R^{N})$, $\displaystyle{\erre_{i}(u)-\erre_{i}(0)= \int_{0}^{1} \frac{d}{dt}\erre_{i}(tu)\, dt}$, so by the assumption ($\erre_2$) we have
\begin{equation}\label{10}
\erre_{i}(u)\leq C_{1}+C_{2}\|u\|_{\h}^{\delta_{i}}. 
\end{equation}

The main result of this section is the following
\begin{thm}\label{thm1.1}
Let us suppose $(g_1)-(g_4)$ and $(\erre_1)-(\erre_5)$. Then, for any $h\in \N$ there exists $q(h)>0$ such that for any $0<q<q(h)$ the functional $\F_{q}$ admits at least $h$ couples of critical points in $H^{s}_{\rm rad}(\R^{N})$. 
\end{thm}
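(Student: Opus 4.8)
The plan is to work throughout in the radial subspace $H^{s}_{\rm rad}(\R^{N})$, where Theorem \ref{Lions} restores the compactness lost on the full space $\h$, and to exploit the fact that $\F_{q}$ is even. Indeed, by $(g_1)$ the map $u\mapsto\int_{\R^{N}}G(u)\,dx$ is even and by $(\erre_1)$ each $\erre_{i}$ is even, so $\F_{q}(-u)=\F_{q}(u)$; moreover $(\erre_5)$ together with the manifest $\mathcal{O}(N)$-invariance of the remaining terms makes $\F_{q}$ invariant, so that by Palais' principle of symmetric criticality it suffices to find critical points of the restriction to $H^{s}_{\rm rad}(\R^{N})$. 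Since $\F_{q}$ is even, critical points occur in couples $\{u,-u\}$, and producing $h$ couples reduces to producing $h$ symmetric critical levels, for which the natural tool is a $\mathbb{Z}_{2}$-equivariant minimax based on the Krasnosel'ski\u{\i} genus.

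The first difficulty is geometric. Condition $(g_2)$ forces $G(t)\sim-\tfrac{m}{2}t^{2}$ near the origin, so $u=0$ is a strict local minimum of $\F_{q}$ and the geometry near $0$ is favourable; the trouble comes instead from the Kirchhoff term. Under the dilation $u_{t}=u(\cdot/t)$ one has $[u_{t}]_{\h}^{2}=t^{N-2s}[u]_{\h}^{2}$ and $\int_{\R^{N}}G(u_{t})\,dx=t^{N}\int_{\R^{N}}G(u)\,dx$, whereas $(\erre_4)$ gives $\erre_{i}(u_{t})=t^{\alpha_{i}}\erre_{i}(t^{\beta_{i}}u)$, a contribution which for large $N$ grows faster than $t^{N}$ and therefore prevents $\F_{q}(u_{t})\to-\infty$. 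Following the truncation idea of \cite{JL}, I would first replace $\erre$ by a truncated functional $\erre^{T}$ coinciding with $\erre$ for $\|u\|_{\h}\le T$ and cut off above, so that the truncated functional $\F_{q}^{T}$ regains the descending geometry: using $(g_4)$ and the scaling just displayed one checks that $\F_{q}^{T}$ is negative outside a large ball on every finite-dimensional symmetric subspace, which is precisely the condition required by the symmetric mountain-pass scheme.

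Next I would set up the equivariant minimax following the approach of \cite{HIT}. To compensate for the absence of an Ambrosetti--Rabinowitz condition under the mere Berestycki--Lions hypotheses $(g_1)$--$(g_4)$, I would pass to an augmented functional incorporating the dilation variable, whose purpose is to encode the Pohozaev identity directly into the variational structure and thereby force Palais--Smale sequences to be \emph{bounded}. With boundedness secured, compactness at the relevant levels follows: the quadratic and truncated Kirchhoff parts are controlled by weak lower semicontinuity together with $(\erre_3)$, while the convergence of $\int_{\R^{N}}G(u_{j})\,dx$ and of the duality terms $\int_{\R^{N}}g(u_{j})\varphi\,dx$ is obtained from the compact embedding of Theorem \ref{Lions} combined with Lemmas \ref{prop2.5} and \ref{CW}, applied with $P=G$ (respectively $P=g$) and the growth bounds $(g_2)$--$(g_3)$. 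Defining then
\begin{equation*}
c_{q,m}=\inf_{A\in\Gamma_{m}}\ \sup_{u\in A}\F_{q}^{T}(u),\qquad m=1,\dots,h,
\end{equation*}
where $\Gamma_{m}$ is the class of symmetric compact sets of genus at least $m$, the $\mathbb{Z}_{2}$-deformation lemma produces $h$ couples of critical points of $\F_{q}^{T}$ at these levels.

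Finally I would remove both the truncation and the parameter dependence. The key is to estimate $c_{q,m}$ \emph{uniformly} in $q$: evaluating along the dilation paths and bounding the Kirchhoff contribution by means of $(\erre_2)$ in the form \eqref{10}, one controls $c_{q,m}$ by the corresponding minimax level of the limiting functional $\tfrac{1}{2}[u]_{\h}^{2}-\int_{\R^{N}}G(u)\,dx$ (the case $q=0$), plus an error tending to $0$ as $q\to0^{+}$; these limiting levels are the known radial critical values of \eqref{FBL} from \cite{A, CW}. Consequently the critical points so obtained have $\h$-norm bounded independently of the small parameter, and choosing $q(h)>0$ small enough that this a priori bound lies below the truncation threshold $T$ for every $m\le h$, the truncation is inactive at these solutions; hence they solve the untruncated equation and $\F_{q}$ itself admits at least $h$ couples of critical points in $H^{s}_{\rm rad}(\R^{N})$. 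The main obstacle throughout is the boundedness of Palais--Smale sequences under the weak Berestycki--Lions growth in the presence of the nonlocal Kirchhoff term, which is exactly what the augmented Pohozaev-type functional and the smallness of $q$ are designed to overcome.
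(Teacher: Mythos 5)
Your overall skeleton coincides with the paper's own strategy: truncate the Kirchhoff term \`a la Jeanjean--Le Coz, augment the functional with a dilation variable so as to build Pohozaev-type information into the Palais--Smale sequences, recover compactness in $H^{s}_{\rm rad}(\R^{N})$ via Theorem \ref{Lions} together with Lemmas \ref{prop2.5} and \ref{CW}, and finally take $q$ small so that the critical points of the truncated functional lie below the truncation threshold and hence are critical points of $\F_{q}$. The genuine problem lies in your minimax machinery. You define the levels $c_{q,m}$ using Krasnosel'ski\u{\i} genus classes and then assert that ``the $\mathbb{Z}_{2}$-deformation lemma produces $h$ couples of critical points of $\F_{q}^{T}$ at these levels.'' Any deformation (or Ekeland) argument over genus classes produces, at best, Palais--Smale sequences for $\F_{q}^{T}$ at the levels $c_{q,m}$; to convert them into critical points one needs these sequences to be \emph{bounded}, and under the Berestycki--Lions hypotheses (no Ambrosetti--Rabinowitz condition) this is precisely what is not available. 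The augmented functional does not repair this as you use it: it yields boundedness only for the \emph{special} sequences $(\theta_{j},u_{j})$ carrying the extra information $\frac{\partial}{\partial \theta}\widetilde{\F_{q}^{\Lambda}}(\theta_{j},u_{j})\rightarrow 0$ and $\theta_{j}\rightarrow 0$ (this is how Proposition \ref{prop2.4} works), and producing such sequences \emph{at the genus levels} $c_{q,m}$ is exactly the step your proposal never supplies.

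The reason the paper can make this connection is structural: its minimax classes $\Gamma_{n}$ (odd maps $\gamma:\mathcal{D}_{n}\rightarrow H^{s}_{\rm rad}(\R^{N})$ with prescribed boundary values $\gamma_{n}$) are compatible with the dilation, in the sense that $(\theta,\eta)\mapsto \eta(\cdot/e^{\theta})$ maps the augmented class $\widetilde{\Gamma}_{n}$ into $\Gamma_{n}$; this yields the identity $b_{n}=\tilde{b}_{n}$ of Lemma \ref{lem2.2} and lets Lemma \ref{lem2.3} produce Palais--Smale sequences for the augmented functional \emph{at the original levels}, with $\theta_{j}\rightarrow 0$. Genus classes have no boundary structure and no stated correspondence with an augmented class; a deformation theory for $\widetilde{\F_{q}^{T}}$ on $\R\times H^{s}_{\rm rad}(\R^{N})$ would in addition have to prevent the artificial variable $\theta$ from drifting (the estimates behind Proposition \ref{prop2.4} degenerate when $\theta_{j}\not\rightarrow 0$) and to deal with the fixed-point set $\R\times\{0\}$ of the $\mathbb{Z}_{2}$-action. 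Note also that the paper obtains $h$ \emph{distinct} couples simply by using $b_{n}\rightarrow+\infty$ to select strictly increasing critical values, whereas a genus scheme would need the full Palais--Smale condition again to handle coinciding levels via the ``genus of the critical set'' argument. To close the gap, either adopt the paper's Hirata--Ikoma--Tanaka/Rabinowitz-type odd-map classes, or develop from scratch an equivariant deformation theory for the augmented functional adapted to genus classes; the latter is a nontrivial piece of work that your proposal takes for granted.
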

\smallskip
\noindent 
%As usual when dealing with general nonlinearities, we modify the nonlinear term $g$ in a convenient way. 
Let us define, for any $t\geq 0$, 
\begin{align*}
&g_{1}(t):=(g(t)+mt)^{+}\\
&g_{2}(t):=g_{1}(t)-g(t), 
\end{align*}
and we extend them as odd functions for $t\leq 0$. Observing that 
\begin{align}
&\lim_{t\rightarrow 0} \frac{g_{1}(t)}{t}=0, \label{4}\\
&\lim_{t\rightarrow \infty} \frac{g_{1}(t)}{t^{2^{*}_{s}-1}}=0, \label{5}\\
&g_{2}(t)\geq mt \quad \forall t\geq 0,  \label{g2}
\end{align}
we deduce that for any $\varepsilon>0$ there exists $C_{\varepsilon}>0$ such that 
\begin{equation}\label{g1}
g_{1}(t)\leq C_{\varepsilon} t^{2^{*}_{s}-1} + \varepsilon g_{2}(t) \quad \forall t\geq 0. 
\end{equation}
Setting
\begin{equation*}
G_{i}(t):=\int_{0}^{t} g_{i}(\tau) \, d\tau \quad i=1,2, 
\end{equation*}
by (\ref{g2}) immediately follows that
\begin{equation}\label{8}
G_{2}(t)\geq \frac{m}{2}t^{2} \quad \forall t\in\R,
\end{equation} 
and, by (\ref{g1}) we can see that for any $\varepsilon>0$ there exists $C_{\varepsilon}>0$ such that
\begin{equation}\label{9}
G_{1}(t)\leq C_{\varepsilon}\, |t|^{2^{*}_{s}} + \varepsilon \,G_{2}(t) \quad \forall t\in \R.
\end{equation} 

\noindent
In view of ($\erre_5$), all functionals that we will consider along the paper are invariant under rotations, so, from now on, we will directly define our functionals in $H^{s}_{\rm rad}(\R^{N})$. 

\noindent
Following \cite{JL}, let $\chi \in \C^{\infty}([0, +\infty), \R)$ be a cut-off function such that
\begin{equation*} 
\left\{
\begin{array}{ll}
\chi(t)=1 &\mbox{ for } t\in [0,1]\\
0\leq \chi(t)\leq 1 &\mbox{ for } t\in (1, 2) \\
\chi(t)=0 &\mbox{ for } t\in [2, +\infty) \\
\|\chi'\|_{\infty}\leq 2,
\end{array}
\right.
\end{equation*}
and we set
$$
\xi_{\Lambda}(u)= \chi \left(\frac{\|u\|_{\h}^{2}}{\Lambda^{2}} \right). 
$$
Then we introduce the truncated functional $\F_{q}^{\Lambda}: H^{s}_{\rm rad}(\R^{N})\rightarrow \R$ defined as
\begin{equation*}
\F_{q}^{\Lambda}(u)=\frac{1}{2}[u]_{\h}^{2}+ q \, \xi_{\Lambda}(u) \erre(u) -\int_{\R^{N}}G(u)\, dx . 
\end{equation*}
Clearly, a critical point $u$ of $\F_{q}^{\Lambda}$ with $\|u\|_{\h}\leq \Lambda$ is a critical point of $\F_{q}$. \\
Our first aim is to prove that the truncated functional $\F_{q}^{\Lambda}$ has a symmetric mountain pass geometry:

\begin{lem}\label{lem2.1}
There exist $r_{0}>0$ and $\rho_{0}>0$ such that 
\begin{align}\begin{split}\label{11}
&\F_{q}^{\Lambda}(u)\geq 0, \quad \mbox{ for } \|u\|_{\h}\leq r_{0}  \\ 
&\F_{q}^{\Lambda}(u)\geq \rho_{0}, \quad \mbox{ for } \|u\|_{\h}=r_{0}. 
\end{split}\end{align}
Moreover, for any $n\in\N$ there exists an odd continuous map 
$$
\gamma_{n}: \mathbb{S}^{n-1}\rightarrow H^{s}_{\rm rad}(\R^{N})
$$ 
such that 
\begin{equation}\label{f}
\F_{q}^{\Lambda}(\gamma_{n}(\sigma))<0 \quad \mbox{ for all } \sigma \in \mathbb{S}^{n-1},
\end{equation}
where 
$$
\mathbb{S}^{n-1}=\{\sigma= (\sigma_1, \dots, \sigma_n) \in \R^{n} : |\sigma|=1\}. 
$$
\end{lem}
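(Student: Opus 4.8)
The plan is to treat the two assertions separately: the first is an essentially uniform-in-$(q,\Lambda)$ coercivity estimate near the origin, and the second is a scaling/deformation construction exploiting $(g_4)$ together with the fact that the cut-off $\xi_\Lambda$ switches off the $\erre$-term at large norm.

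\emph{Local geometry near the origin.} For the first part I would write $G=G_1-G_2$ and estimate $-\int_{\R^N}G(u)\,dx=\int_{\R^N}(G_2(u)-G_1(u))\,dx$. Feeding \eqref{9} (with a fixed $\varepsilon<1$) and then \eqref{8} into this gives
\[ -\int_{\R^N} G(u)\,dx \ge (1-\varepsilon)\,\tfrac{m}{2}\|u\|_{L^2}^2 - C_\varepsilon\|u\|_{L^{2^*_s}}^{2^*_s}. \]
Since $\erre=\sum_i\erre_i\ge 0$ by $(\erre_1)$ and $\xi_\Lambda\ge0$, the term $q\,\xi_\Lambda(u)\erre(u)$ is nonnegative and may simply be discarded, so that
\[ \F_q^\Lambda(u) \ge \tfrac12[u]_{H^s}^2 + (1-\varepsilon)\tfrac{m}{2}\|u\|_{L^2}^2 - C_\varepsilon\|u\|_{L^{2^*_s}}^{2^*_s}. \]
Bounding the first two terms below by $c_0\|u\|_{H^s}^2$ with $c_0=\min\{\tfrac12,(1-\varepsilon)\tfrac{m}{2}\}>0$ and using the Sobolev embedding (Theorem~\ref{sobolevthm}) to control $\|u\|_{L^{2^*_s}}^{2^*_s}\le C\|u\|_{H^s}^{2^*_s}$, I obtain $\F_q^\Lambda(u)\ge c_0\|u\|_{H^s}^2-C\|u\|_{H^s}^{2^*_s}$. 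As $2^*_s>2$, the scalar function $\rho\mapsto c_0\rho^2-C\rho^{2^*_s}$ is strictly positive on a small interval $(0,r_0]$, which yields $\F_q^\Lambda\ge0$ on $\{\|u\|_{H^s}\le r_0\}$ and $\F_q^\Lambda\ge\rho_0:=c_0r_0^2-Cr_0^{2^*_s}>0$ on $\{\|u\|_{H^s}=r_0\}$. Crucially, this bound is independent of $q$ and $\Lambda$.

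\emph{Construction of the odd map.} Using $(g_4)$ and the Berestycki--Lions plateau construction I would first produce a radial $w\in H^s_{\rm rad}(\R^N)$ with $\int_{\R^N}G(w)\,dx>0$, by taking $w\equiv\zeta$ on a large ball and cutting off to zero, so that the positive bulk $G(\zeta)|B_R|$ dominates the negative transition contribution controlled by $(g_2)$. To obtain the $n$-parameter family I would pick $n$ such radial profiles $w_1,\dots,w_n$ supported in mutually disjoint annuli and assemble an odd continuous map $\sigma\mapsto v_\sigma$ on $\mathbb{S}^{n-1}$, valued in their span, with $\int_{\R^N}G(v_\sigma)\,dx\ge c>0$ uniformly, and then set $\gamma_n(\sigma)=v_\sigma(\cdot/t)$ for one fixed large dilation $t$. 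Under dilation $[v_\sigma(\cdot/t)]_{H^s}^2=t^{N-2s}[v_\sigma]_{H^s}^2$ while $\int G(v_\sigma(\cdot/t))\,dx=t^N\int G(v_\sigma)\,dx$, so for $t$ large two things happen at once: $\|\gamma_n(\sigma)\|_{H^s}^2\ge 2\Lambda^2$, which forces $\xi_\Lambda(\gamma_n(\sigma))=0$ and removes the $q\erre$ contribution entirely, and
\[ \F_q^\Lambda(\gamma_n(\sigma)) = \tfrac{t^{N-2s}}{2}[v_\sigma]_{H^s}^2 - t^N\int_{\R^N} G(v_\sigma)\,dx < 0. \]
Oddness is automatic since $g$ is odd by $(g_1)$, hence $G$ is even and $\F_q^\Lambda$ is even, so it suffices that $\sigma\mapsto v_\sigma$ be odd; continuity of $\gamma_n$ and compactness of $\mathbb{S}^{n-1}$ then upgrade the pointwise negativity to \eqref{f}.

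\emph{Main obstacle.} The delicate point is the construction of the odd family $\sigma\mapsto v_\sigma$ with $\int_{\R^N}G(v_\sigma)\,dx$ \emph{uniformly} positive: the naive linear combination $\sum_i\sigma_i w_i$ fails, because directions with several small components $\sigma_i$ make the corresponding pieces small, where $G(t)\approx-\tfrac{m}{2}t^2<0$ by $(g_2)$, so $\int G$ can turn negative. Overcoming this is exactly the Berestycki--Lions multiplicity device: separate the supports and absorb the small-amplitude loss by a single sufficiently large common dilation, using compactness of $\mathbb{S}^{n-1}$ to fix one $t$ valid for every $\sigma$. A second, genuinely fractional difficulty is that, unlike the local Dirichlet energy, the Gagliardo seminorm does not split over disjoint supports: the cross terms
\[ \iint_{A_i\times A_j}\frac{|v_\sigma(x)-v_\sigma(y)|^2}{|x-y|^{N+2s}}\,dx\,dy,\qquad i\ne j, \]
must be estimated and shown to remain controlled so that $[v_\sigma]_{H^s}^2$ stays bounded in $\sigma$, which is what guarantees that the $t^{N-2s}$ term is beaten by the $t^N$ term for large $t$. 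Once these two estimates are secured, the remaining checks — continuity, oddness, and the uniform strict inequality — are routine.
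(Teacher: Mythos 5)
Your first half is correct and coincides with the paper's own argument: take $\varepsilon=\tfrac12$ in \eqref{9}, use \eqref{8}, discard the nonnegative term $q\,\xi_{\Lambda}(u)\erre(u)$, and apply Theorem \ref{sobolevthm} to obtain $\F_{q}^{\Lambda}(u)\geq \min\{\tfrac12,\tfrac m4\}\|u\|_{\h}^{2}-C\|u\|_{\h}^{2^{*}_{s}}$, uniformly in $q$ and $\Lambda$. The skeleton of your second half is also the paper's: an odd continuous family with $\int_{\R^{N}}G$ uniformly positive, composed with one large dilation, using that $\xi_{\Lambda}$ vanishes once $\|u\|_{\h}^{2}\geq 2\Lambda^{2}$ so that the $\erre$-term disappears (this needs $\inf_{\sigma}\|v_{\sigma}\|_{\h}>0$, which indeed follows from $\int G(v_{\sigma})\geq c>0$, continuity, and compactness of $\mathbb{S}^{n-1}$).

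The gap is in the construction of that odd family, which is the heart of the second assertion. You rightly observe that the linear assembly $v_{\sigma}=\sum_{i}\sigma_{i}w_{i}$ over disjoint annuli fails: for instance, if $G<0$ on $(0,\zeta)$, the direction $\sigma=(n^{-1/2},\dots,n^{-1/2})$ gives every piece the amplitude $\zeta/\sqrt n$, hence $\int_{\R^{N}} G(v_{\sigma})\,dx<0$. But your proposed repair --- absorbing this loss ``by a single sufficiently large common dilation'' --- cannot work, because dilation scales the integral as $\int_{\R^{N}}G(v_{\sigma}(\cdot/t))\,dx=t^{N}\int_{\R^{N}}G(v_{\sigma})\,dx$ and therefore preserves its sign; for any $\sigma$ with $\int G(v_{\sigma})\leq 0$ one gets $\F_{q}^{\Lambda}(v_{\sigma}(\cdot/t))\geq \tfrac{t^{N-2s}}{2}[v_{\sigma}]_{\h}^{2}\geq 0$ for every $t$, so \eqref{f} fails on those directions no matter how large $t$ is chosen. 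The actual Berestycki--Lions device (Theorem 10 of \cite{BL2}, which is precisely what the paper invokes to produce $\pi_{n}$ with $0\notin\pi_{n}(\mathbb{S}^{n-1})$ and $\int_{\R^{N}}G(\pi_{n}(\sigma))\,dx\geq 1$) is genuinely different: the parameter $\sigma$ controls the \emph{geometry} (radii of plateaus) of the function, whose values stay at $\pm\zeta$ --- where $G(\zeta)>0$ --- on sets of large measure, the amplitude is never scaled down, and the transition layers have uniformly bounded measure; thus the uniform positivity of $\int G$ holds pointwise on $\mathbb{S}^{n-1}$ \emph{before} any dilation, and the dilation is only used afterwards to beat the seminorm term, exactly as in your final display. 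Without that construction (or an equivalent one) your proof of \eqref{f} is incomplete. Incidentally, your second ``genuinely fractional difficulty'' about the Gagliardo cross-terms over disjoint annuli is not a real obstacle: once $\sigma\mapsto v_{\sigma}$ is continuous into $H^{s}_{\rm rad}(\R^{N})$, compactness of $\mathbb{S}^{n-1}$ gives $\sup_{\sigma}[v_{\sigma}]_{\h}^{2}<\infty$ for free, with no need to split the seminorm over the supports.
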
 
\begin{proof}
Taking $\varepsilon=\frac{1}{2}$ in (\ref{9}), and by using (\ref{8}), the positivity of $\erre$, and Theorem \ref{sobolevthm}, we have
\begin{align*}
\F_{q}^{\Lambda}(u) &= \frac{1}{2}[u]^{2}_{\h} + \int_{\R^{N}} G_{2}(u)\, dx + q \,\xi_{\Lambda}(u) \erre(u) -\int_{\R^{N}} G_{1}(u) \, dx \\
&\geq \frac{1}{2}[u]^{2}_{\h} + \frac{m}{4}\|u\|_{L^{2}(\R^{N})}^{2} - C_{\frac{1}{2}} \|u\|_{L^{2^{*}_{s}}(\R^{N})}^{2^{*}_{s}} \\
&\geq \min\left\{\frac{1}{2}, \frac{m}{4}\right\} \|u\|_{\h}^{2} - C_{\frac{1}{2}} C^{*}\|u\|_{\h}^{2^{*}_{s}}
\end{align*}
from which easily follows $(\ref{11})$. \\
Proceeding similarly to Theorem 10 in \cite{BL2}, for any $n\in \N$, there exists an odd continuous map $\pi_{n}: \mathbb{S}^{n-1}\rightarrow H^{s}_{\rm rad}(\R^{N})$ such that
\begin{align*}
&0\notin \pi_{n}(\mathbb{S}^{n-1}),\\
&\int_{\R^{N}} G(\pi_{n}(\sigma))\, dx \geq 1 \mbox{ for all } \sigma \in \mathbb{S}^{n-1}. 
\end{align*}
Let us define
$$
\psi_{n}^{t}(\sigma)= \pi_{n}(\sigma) \left(\frac{\cdot}{t}\right) \mbox{ with } t\geq 1. 
$$
Then, for $t$ sufficiently large, we get 
\begin{align*}
\F_{q}^{\Lambda}(\psi_{n}^{t}(\sigma))&= \frac{t^{N-2s}}{2}[\pi_{n}(\sigma)]_{\h}^{2} \\
&+ q \, \chi\left(\frac{t^{N-2s}[\pi_{n}(\sigma)]_{\h}^{2} +t^{N}\|\pi_{n}(\sigma)\|_{L^{2}(\R^{N})}^{2} }{\Lambda^{2}} \right) \erre(\psi_{n}^{t}(\sigma)) \\
&- t^{N} \int_{\R^{N}} G(\pi_{n}(\sigma)) \, dx \\
&\leq t^{N-2s} \left\{\frac{[\pi_{n}(\sigma)]_{\h}^{2}}{2} - t^{2s}\right\}<0. 
%\rightarrow -\infty \mbox{ as } t\rightarrow +\infty. 
\end{align*}
Therefore, we can choose $\bar{t}$ such that $\F_{q}^{\Lambda}(\psi_{n}^{\bar t}(\sigma))<0$ for all $\sigma \in \mathbb{S}^{n-1}$, and by setting $\gamma_{n}(\sigma)(x):=\psi_{n}^{\bar{t}}(\sigma)(x)$, we can see that $\gamma_{n}$ satisfies the required properties. 

\end{proof}

\noindent
Now we define the minimax value of $\F_{q}^{\Lambda}$ by using the maps $\gamma_{n}: \partial\mathcal{D}_{n}\rightarrow H^{s}_{\rm rad}(\R^{N})$ obtained in Lemma \ref{lem2.1}. For any $n\in \N$, let
$$
b_{n}= b_{n}(q, \Lambda) = \inf_{\gamma\in \Gamma_{n}} \max_{\sigma \in \mathcal{D}_{n}} \F_{q}^{\Lambda}(\gamma(\sigma)),  
$$
where $\mathcal{D}_{n}=\{\sigma= (\sigma_1, \dots, \sigma_n)\in \R^{n} : |\sigma|\leq 1\}$ and 
\begin{equation*}
\Gamma_{n}= \left\{ \gamma\in \C(\mathcal{D}_{n}, H^{s}_{\rm rad}(\R^{N})) : 
\begin{array}{ll}
\gamma(-\sigma)=-\gamma(\sigma) &\mbox{ for all } \sigma \in \mathcal{D}_{n} \\
\gamma(\sigma)=\gamma_{n}(\sigma) &\mbox{ for all } \sigma \in \partial \mathcal{D}_{n}	
\end{array}
\right\}.
\end{equation*}
%and $\gamma_{n}: \partial\mathcal{D}_{n}\rightarrow H^{s}_{\rm rad}(\R^{N})$ is given in Lemma \ref{lem2.1}. 
Let us introduce the following modified functionals
\begin{align*}
&\widetilde{\F_{q}}(\theta, u)= \F_{q}(u(\cdot/ e^{\theta}))\\
&\widetilde{\F_{q}^{\Lambda}}(\theta, u)= \F_{q}^{\Lambda}(u(\cdot/e^{\theta}))
\end{align*}
for $(\theta,u)\in \R\times H^{s}_{\rm rad}(\R^{N})$.\\
We set
\begin{align*}
&\widetilde{\F_{q}'}(\theta, u)=\frac{\partial}{\partial u} \widetilde{\F_{q}}(\theta, u), \\
&(\widetilde{\F_{q}^{\Lambda}})'(\theta, u)= \frac{\partial}{\partial u} \widetilde{\F_{q}^{\Lambda}}(\theta, u), \\
&\tilde{b}_{n}= \tilde{b}_{n}(q, \Lambda)= \inf_{\tilde{\gamma}\in \widetilde{\Gamma}_{n}} \max_{\sigma \in \mathcal{D}_{n}} \widetilde{\F_{q}^{\Lambda}}(\tilde{\gamma}(\sigma)),
\end{align*}
where 
\begin{equation*}
\widetilde{\Gamma}_{n}= \left\{ \tilde{\gamma}\in \C(\mathcal{D}_{n}, \R\times H^{s}_{\rm rad}(\R^{N})) : 
\begin{array}{ll}
\tilde{\gamma}(\sigma)= (\theta(\sigma), \eta(\sigma)) \mbox{ satisfies }\\
(\theta(-\sigma), \eta(-\sigma))=(\theta(\sigma), -\eta(\sigma)) &\mbox{ for all } \sigma \in \mathcal{D}_{n} \\
(\theta(\sigma), \eta(\sigma))= (0, \gamma_{n}(\sigma)) &\mbox{ for all } \sigma \in \partial \mathcal{D}_{n}	
\end{array}
\right\}.
\end{equation*}
By the assumption ($\erre_4$) we get
\begin{align*}
\widetilde{\F_{q}}(\theta, u)=\frac{e^{(N-2s)\theta}}{2} [u]_{\h}^{2} + q\sum_{i=1}^{k} e^{\alpha_{i}\theta}\erre_{i} (e^{\beta_{i}\theta}u)- e^{N\theta} \int_{\R^{N}} G(u)\, dx , 
\end{align*}
and 
\begin{align*}
\widetilde{\F_{q}^{\Lambda}}(\theta, u)&= \frac{e^{(N-2s)\theta}}{2} [u]_{\h}^{2} + q \chi \left(\frac{e^{(N-2s)\theta}[u]_{\h}^{2}+e^{N\theta}\|u\|_{L^{2}(\R^{N})}^{2}}{\Lambda^{2}}\right) \sum_{i=1}^{k} e^{\alpha_{i}\theta}\erre_{i} (e^{\beta_{i}\theta}u)\\
&- e^{N\theta} \int_{\R^{N}} G(u) \, dx. 	
\end{align*}

\noindent
Proceeding as in \cite{A, HIT, Rab}, we can see that the following results hold.
\begin{lem}\label{lem2.2}
We have
\begin{compactenum}[(1)]
\item there exists $\bar{b}>0$ such that $b_{n}\geq \bar{b}$ for any $n\in \N$, 
\item $b_{n}\rightarrow +\infty$,
\item $b_{n}=\tilde{b}_{n}$ for any $n\in \N$. 
\end{compactenum}
\end{lem}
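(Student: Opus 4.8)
The plan is to dispatch the three statements separately, in increasing order of difficulty: (3) is a reformulation, (1) is immediate from the mountain pass geometry of Lemma~\ref{lem2.1}, and the substance is the divergence in (2). I would prove (3) first, by exhibiting a value-preserving correspondence between $\Gamma_n$ and $\widetilde\Gamma_n$. Given $\tilde\gamma(\sigma)=(\theta(\sigma),\eta(\sigma))\in\widetilde\Gamma_n$, set $\gamma(\sigma):=\eta(\sigma)(\cdot/e^{\theta(\sigma)})$; since dilations act continuously on $\h$ and preserve radial symmetry, $\gamma\in\C(\mathcal{D}_n,H^s_{\rm rad}(\R^N))$, the parities $\theta(-\sigma)=\theta(\sigma)$ and $\eta(-\sigma)=-\eta(\sigma)$ give $\gamma(-\sigma)=-\gamma(\sigma)$, and on $\partial\mathcal{D}_n$ one has $\theta=0$, $\eta=\gamma_n$, so $\gamma=\gamma_n$ there; thus $\gamma\in\Gamma_n$. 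By the definition of $\widetilde{\F_q^{\Lambda}}$ we have $\F_q^{\Lambda}(\gamma(\sigma))=\widetilde{\F_q^{\Lambda}}(\tilde\gamma(\sigma))$ for every $\sigma$, whence $b_n\le\tilde b_n$; the reverse correspondence $\gamma\mapsto(0,\gamma)$ produces the same maximal value, so $b_n=\tilde b_n$.

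For (1), every $\gamma\in\Gamma_n$ is odd, so $\gamma(0)=0$ and $\|\gamma(0)\|_{\h}=0<r_0$. On $\partial\mathcal{D}_n$ we have $\F_q^{\Lambda}(\gamma_n(\sigma))<0$ by \eqref{f}, which, compared with the first line of \eqref{11}, forces $\|\gamma_n(\sigma)\|_{\h}>r_0$. Since $\mathcal{D}_n$ is connected and $\sigma\mapsto\|\gamma(\sigma)\|_{\h}$ is continuous, $\gamma(\mathcal{D}_n)$ meets $\{\|u\|_{\h}=r_0\}$, where $\F_q^{\Lambda}\ge\rho_0$ by the second line of \eqref{11}; hence $\max_\sigma\F_q^{\Lambda}(\gamma(\sigma))\ge\rho_0$, and taking the infimum yields $b_n\ge\bar b:=\rho_0$ uniformly in $n$.

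The heart of the lemma is (2), which I would obtain by a genus comparison built on the compactness of the radial embedding. Fix an orthonormal basis $(e_j)_{j\in\N}$ of $H^s_{\rm rad}(\R^N)$ and put $Z_m:=\overline{\mathrm{span}}\{e_j:j\ge m\}$, of codimension $m-1$. For a fixed $p\in(2,2^*_s)$, Theorem~\ref{Lions} gives $\beta_m:=\sup\{\|u\|_{L^p(\R^N)}:u\in Z_m,\ \|u\|_{\h}=1\}\to0$ (otherwise a normalized sequence in $Z_m$ would converge weakly to $0$ yet not in $L^p$, contradicting compactness). Using \eqref{8}, the nonnegativity of $\erre$, and the strict subcriticality \eqref{4}--\eqref{5} in the sharper form $G_1(t)\le\varepsilon(|t|^2+|t|^{2^*_s})+C_\varepsilon|t|^p$, one gets on $Z_m$ the bound $\F_q^{\Lambda}(u)\ge c_0\|u\|_{\h}^2-\varepsilon C_\ast\|u\|_{\h}^{2^*_s}-C_\varepsilon\beta_m^p\|u\|_{\h}^p$ with $c_0=\min\{\tfrac12,\tfrac m4\}$.

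To turn this into divergence I would optimize the dilation permitted by $(\erre_{4})$ and use $b_n=\tilde b_n$: after scaling, the smallness of $\int G_1(u)$ on $Z_m$ (through $\beta_m\to0$) makes the minimax energy carried by $Z_m$ diverge with $m$, and a Borsuk--Ulam intersection — every compact symmetric set of genus $\ge n$ meets $Z_n$ — then forces $b_n$ above that diverging quantity, giving $b_n\to+\infty$. The main obstacle is exactly the critical term $\varepsilon C_\ast\|u\|_{\h}^{2^*_s}$: because $g$ is only assumed subcritical (not a power), $\int G_1(u)$ cannot be dominated by a single subcritical norm, so one must \emph{first} freeze $\varepsilon$ (hence $C_\varepsilon$) at a level making $c_0\|u\|_{\h}^2-\varepsilon C_\ast\|u\|_{\h}^{2^*_s}$ diverge along the chosen radii, and \emph{only afterwards} invoke $\beta_m\to0$ to annihilate the remaining term; reabsorbing the critical term through this ordering and the scaling structure is precisely why the augmented functional $\widetilde{\F_q^{\Lambda}}$ and the identity $b_n=\tilde b_n$ are introduced. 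Throughout, the detailed genus bookkeeping would follow \cite{A,HIT,Rab}.
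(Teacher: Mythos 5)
The paper offers no proof of Lemma \ref{lem2.2} at all---it simply defers to \cite{A, HIT, Rab}---so your proposal is being measured against the standard arguments in those references. Your parts (3) and (1) reproduce them correctly: the value-preserving correspondences $\tilde\gamma=(\theta,\eta)\mapsto \eta(\cdot)\left(\cdot/e^{\theta(\cdot)}\right)$ and $\gamma\mapsto(0,\gamma)$ give $b_n=\tilde b_n$, and the connectedness argument comparing \eqref{f} with \eqref{11} gives $b_n\geq\bar b:=\rho_0$ uniformly in $n$, since $r_0,\rho_0$ in Lemma \ref{lem2.1} do not depend on $n$.

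Part (2), however, has a genuine gap at its crux. Your comparison estimate on $Z_m$ and the fact $\beta_m\to 0$ (via Theorem \ref{Lions}) are fine, and so is the order of choices (freeze $\varepsilon$, then $m$); but the intersection step is both misstated and unjustified. First, genus cannot be applied to $\gamma(\mathcal{D}_n)$, since $0=\gamma(0)$ belongs to it; the correct tool is Borsuk--Ulam applied to the odd map $\sigma\mapsto P_{Z_m^{\perp}}\gamma(\sigma)$ on the boundary of the symmetric open set $\{\sigma\in\mathcal{D}_n:\|\gamma(\sigma)\|_{\h}<r\}$, and this yields $\gamma(\mathcal{D}_n)\cap Z_m\cap\{\|u\|_{\h}=r\}\neq\emptyset$ \emph{only} when $r<\min_{\sigma\in\partial\mathcal{D}_n}\|\gamma_n(\sigma)\|_{\h}$, so that this open set stays away from $\partial\mathcal{D}_n$. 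Second, the radius forced on you by the critical term is $r_\varepsilon\sim\varepsilon^{-1/(2^{*}_{s}-2)}\to\infty$, whereas Lemma \ref{lem2.1} only guarantees $\|\gamma_n\|_{\h}>r_0$ on $\partial\mathcal{D}_n$: if $\max_{\partial\mathcal{D}_n}\|\gamma_n\|_{\h}<r_\varepsilon$, a competitor $\gamma\in\Gamma_n$ may have image entirely inside the ball of radius $r_\varepsilon$, the intersection is empty, and your lower bound on $b_n$ collapses. The missing (and standard) ingredient is that in the construction of Lemma \ref{lem2.1} the dilation parameter $\bar t=\bar t_n$ can be taken arbitrarily large (the estimate there is negative for \emph{every} large $t$), so that $\min_{\partial\mathcal{D}_n}\|\gamma_n\|_{\h}\geq \bar t_n^{\,N/2}\min_\sigma\|\pi_n(\sigma)\|_{L^{2}(\R^{N})}\to\infty$; alternatively one shows $b_n$ is unchanged under enlarging $\bar t$ by gluing in the homotopy $\psi_n^{t}$, $t\in[\bar t,\bar t\,']$, along which $\F_q^{\Lambda}<0$. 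Without one of these statements the argument does not close. Finally, a structural misreading: the augmented functional $\widetilde{\F_q^{\Lambda}}$ and the identity $b_n=\tilde b_n$ are not there to ``reabsorb the critical term'' in (2)---the divergence proof never uses them; they exist to produce Palais--Smale sequences carrying the scaling-derivative information (iv) of Lemma \ref{lem2.3}, which is what Proposition \ref{prop2.4} exploits. Likewise ``optimizing the dilation permitted by $(\erre_4)$'' plays no role, since $\erre\geq 0$ is simply discarded in the lower bound.
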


\begin{lem}\label{lem2.3}
For any $n\in \N$ there exists a sequence $\{(\theta_{j}, u_{j})\}_{j\in \N}\subset \R \times H^{s}_{\rm rad}(\R^{N})$ such that
\begin{compactenum}[(i)]
\item $\theta_{j}\rightarrow 0$,
\item $\widetilde{\F_{q}^{\Lambda}}(\theta_{j}, u_{j})\rightarrow b_{n}$,
\item $(\widetilde{\F_{q}^{\Lambda}})'(\theta_{j}, u_{j})\rightarrow 0$ strongly in $(H^{s}_{\rm rad}(\R^{N}))^{-1}$,
\item $\displaystyle{\frac{\partial}{\partial \theta} \widetilde{\F_{q}^{\Lambda}}(\theta_{j}, u_{j})\rightarrow 0}$.
\end{compactenum}
\end{lem}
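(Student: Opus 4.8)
The plan is to realize this as an equivariant minimax principle for the augmented functional $\widetilde{\F_q^{\Lambda}}$ on the product space $X=\R\times H^{s}_{\rm rad}(\R^{N})$, following \cite{A, HIT, Rab}. First I would record that $\widetilde{\F_q^{\Lambda}}\in\C^{1}(X,\R)$, its full $X$-differential being the pair consisting of the $u$-partial $(\widetilde{\F_q^{\Lambda}})'(\theta,u)$ and the $\theta$-partial $\frac{\partial}{\partial\theta}\widetilde{\F_q^{\Lambda}}(\theta,u)$; the explicit $\theta$-expansion produced by $(\erre_4)$ guarantees that both partials depend continuously on $(\theta,u)$. Moreover $\widetilde{\F_q^{\Lambda}}$ is invariant under the $\mathbb{Z}_{2}$-action $(\theta,u)\mapsto(\theta,-u)$ (because $G$ is even, while $\erre$ and the seminorm are even), and by Lemma \ref{lem2.2}(3) its augmented minimax value over $\widetilde{\Gamma}_{n}$ satisfies $\tilde b_{n}=b_{n}$, which by Lemma \ref{lem2.2}(1) is strictly positive. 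Thus $b_{n}$ is a genuine equivariant minimax level for $\widetilde{\F_q^{\Lambda}}$.

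Next I would apply a quantitative, $\mathbb{Z}_{2}$-equivariant deformation lemma on $X$. If no Palais--Smale sequence existed at the level $b_{n}$, one could deform a near-optimal path in $\widetilde{\Gamma}_{n}$ strictly below $b_{n}$ by an odd homeomorphism fixing $\partial\mathcal{D}_{n}$ (where, by Lemma \ref{lem2.1}, the values are negative, hence well below the critical strip), contradicting $\tilde b_{n}=b_{n}$. This yields a sequence $(\theta_{j},u_{j})\in X$ with $\widetilde{\F_q^{\Lambda}}(\theta_{j},u_{j})\to b_{n}$ and full $X$-differential tending to $0$. Since $X^{-1}=\R\times(H^{s}_{\rm rad}(\R^{N}))^{-1}$, this delivers at once (ii), together with (iii) $(\widetilde{\F_q^{\Lambda}})'(\theta_{j},u_{j})\to0$ in $(H^{s}_{\rm rad}(\R^{N}))^{-1}$ and (iv) $\frac{\partial}{\partial\theta}\widetilde{\F_q^{\Lambda}}(\theta_{j},u_{j})\to0$.

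The subtle point is (i), the requirement $\theta_{j}\to 0$, which a bare deformation argument does not control. Here I would use the minimax principle in its sharper form, tracking the distance of the Palais--Smale sequence to a chosen minimizing family of paths. The equality $\tilde b_{n}=b_{n}$ is exactly what makes this work: the maps $\tilde\gamma(\sigma)=(0,\gamma(\sigma))$ with $\gamma\in\Gamma_{n}$ already lie in $\widetilde{\Gamma}_{n}$ and realize the value $b_{n}$, so one may pick a minimizing sequence of paths $\tilde\gamma_{m}\in\widetilde{\Gamma}_{n}$ whose $\theta$-component is uniformly small. The refined deformation then places $(\theta_{j},u_{j})$ within vanishing $X$-distance of $\tilde\gamma_{m}(\mathcal{D}_{n})$, forcing $\theta_{j}\to0$.

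I expect the main obstacle to be precisely this localization yielding (i): one must build the deformation flow on $\R\times H^{s}_{\rm rad}(\R^{N})$ from a pseudo-gradient that simultaneously respects the odd symmetry in $u$ and keeps the $\theta$-coordinate close to the minimizing family, and then verify that $\tilde b_{n}=b_{n}$ is what prevents any drift of the sequence in the $\theta$-direction. Once (i) is secured, properties (ii)--(iv) are immediate consequences of the full differential on $X$ tending to zero.
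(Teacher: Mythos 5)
Your proposal is correct and takes essentially the same route as the paper: the paper offers no proof of this lemma beyond the remark ``Proceeding as in \cite{A, HIT, Rab}'', and the argument contained in those references is exactly what you reconstruct --- the scaling-augmented functional on $\R\times H^{s}_{\rm rad}(\R^{N})$, an odd ($\mathbb{Z}_{2}$-equivariant) deformation at the level $b_{n}$, and the localization of the Palais--Smale sequence near the minimizing family $\{(0,\gamma(\sigma))\}$, with the identity $\tilde{b}_{n}=b_{n}$ from Lemma \ref{lem2.2} being precisely what forces $\theta_{j}\rightarrow 0$. You also correctly flag the role of Lemma \ref{lem2.1} (boundary values negative, hence below the critical strip) in making the deformation admissible in $\widetilde{\Gamma}_{n}$, so nothing essential is missing relative to the paper's intended argument.
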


\noindent
Our goal is to prove that, for a suitable choice of $\Lambda$ and $q$, the sequence $\{(\theta_{j}, u_{j})\}_{j\in \N}$ given by Lemma \ref{lem2.3} is a bounded Palais-Smale sequence for $\F_{q}$. We begin proving the boundedness of $\{u_{j}\}_{j\in\N}$ in $\h$.  
%Now we prove that for a suitable choice of $\Lambda$ and $q$, the sequence $\{u_{j}\}_{j}$ obtained in Lemma \ref{lem2.3} actually is a bounded Palais-Smale sequence of $\F_{q}$. 
\begin{prop}\label{prop2.4}
Let $n\in \N$ and $\Lambda_{n}>0$ sufficiently large. There exists $q_{n}$, depending on $\Lambda_{n}$, such that for any $0<q<q_{n}$, if $\{(\theta_{j}, u_{j})\}_{j\in \N}\subset \R\times H^{s}_{\rm rad}(\R^{N})$ is the sequence given in Lemma \ref{lem2.3}, then, up to a subsequence, $\|u_{j}\|_{\h}\leq \Lambda_{n}$, for any $j\in \N$. 
\end{prop}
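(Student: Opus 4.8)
The plan is to extract boundedness from a fractional Pohozaev balance encoded in the scaling variable $\theta$. Writing $A_j := [u_j]_{\h}^{2}$ and $B_j := \|u_j\|_{L^{2}(\R^{N})}^{2}$, I would first make the two relations coming from Lemma \ref{lem2.3} explicit, namely the level condition (ii) and the vanishing $\theta$-derivative (iv). Using the explicit form of $\widetilde{\F_{q}^{\Lambda}}$ and abbreviating $a_j = e^{(N-2s)\theta_j}$ and $d_j = e^{N\theta_j}$ (both $\to 1$ by (i)), these read
\begin{align*}
\tfrac{a_j}{2}A_j + q\,\chi(\Phi_j)K_j - d_j\int_{\R^{N}} G(u_j)\,dx &= b_n + o(1),\\
\tfrac{N-2s}{2}a_j A_j - N d_j\int_{\R^{N}} G(u_j)\,dx + q D_j &= o(1),
\end{align*}
where $\Phi_j$ is the argument of the cut-off, $K_j = \sum_{i} e^{\alpha_i\theta_j}\erre_i(e^{\beta_i\theta_j}u_j)$, and $D_j$ collects the full $\theta$-derivative of the truncated Kirchhoff term (including the $\chi'$ contribution). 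Forming the combination $N\times(\text{level})-(\theta\text{-derivative})$ eliminates the potential $\int G(u_j)$ and yields
$$
s\,a_j A_j = N b_n - q\bigl(N\chi(\Phi_j)K_j - D_j\bigr) + o(1).
$$
This is exactly the augmented-functional device: the scaling parameter manufactures the fractional Pohozaev identity $\tfrac{N-2s}{2}A_j = N\int G(u_j)$ in the limit $q\to 0$.

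Next I would control the Kirchhoff remainder uniformly. Whenever $\chi(\Phi_j)\neq 0$ or $\chi'(\Phi_j)\neq 0$ one has $\Phi_j \leq 2$, i.e. $a_j A_j + d_j B_j \leq 2\Lambda_n^{2}$, so $\|u_j\|_{\h}^{2}\leq 4\Lambda_n^{2}$ on the support of the truncation (for $j$ large, since $a_j,d_j\geq \tfrac12$); by ($\erre_2$) and (\ref{10}) this bounds $K_j$ and $D_j$ by a constant $C(\Lambda_n)$ depending only on $\Lambda_n$. Off the support both terms vanish. Hence $|N\chi(\Phi_j)K_j - D_j|\leq C(\Lambda_n)$ in all cases, and since $a_j\to 1$ and (from the minimax definition of $b_n$ together with ($\erre_1$) and (\ref{10})) one has $b_n = b_n(q,\Lambda_n)\leq c_n + q\,C(\Lambda_n)$ with $c_n$ the $q=0$ level, I obtain
$$
A_j \leq \tfrac{2}{s}\bigl(N c_n + q\,C(\Lambda_n)\bigr) + o(1),
$$
a bound on the Gagliardo seminorm whose leading part is independent of $\Lambda_n$. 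The fractional Sobolev inequality $\|u\|_{L^{2^{*}_{s}}(\R^{N})}\leq C[u]_{\h}$ (see \cite{DPV}) then controls $\|u_j\|_{L^{2^{*}_{s}}(\R^{N})}$ through $A_j$.

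To recover the full norm I would use the splitting $G = G_1 - G_2$. From the level relation, dropping the nonnegative terms $\tfrac{a_j}{2}A_j$ and $q\chi(\Phi_j)K_j$ and estimating $\int G_1(u_j)\leq C_\varepsilon\|u_j\|_{L^{2^{*}_{s}}(\R^{N})}^{2^{*}_{s}} + \varepsilon\int G_2(u_j)$ via (\ref{9}), the coercivity (\ref{8}) gives $\tfrac{m}{2}B_j \leq \int G_2(u_j)\leq K_n + o(1)$ for a constant $K_n$ depending only on $c_n$, $m$, and the $q$-small correction. Putting the two estimates together yields $\|u_j\|_{\h}^{2}=A_j+B_j\leq K_n' + o(1)$, whose leading part depends only on $n$ (through $c_n$) and not on $\Lambda_n$. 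I then close by the order of quantifiers in the statement: first fix $\Lambda_n$ so large that $\Lambda_n^{2}$ exceeds this leading part, then choose $q_n$ small enough that the $q$-dependent corrections (all of the form $q\,C(\Lambda_n)$) are absorbed, so that $K_n' < \Lambda_n^{2}$. For $j$ large the $o(1)$ is negligible, hence $\|u_j\|_{\h}\leq \Lambda_n$, and passing to the corresponding subsequence gives the claim.

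The main obstacle, and the point that needs care, is precisely this interplay between the truncation and the two parameters: the Kirchhoff contributions $K_j, D_j$ are only controlled by the $\Lambda_n$-dependent constant $C(\Lambda_n)$, so one cannot simply let $q\to 0$ first. The argument is rescued by the fact that, after the Pohozaev cancellation, these contributions enter solely with the prefactor $q$; choosing $\Lambda_n$ first (from the $q$-independent leading bound) and $q_n$ afterwards (small relative to $C(\Lambda_n)$) breaks the apparent circularity. A secondary technical care is required to treat the $\chi'$ term inside $D_j$ and to verify that the uniform upper bound on $b_n$ holds for all small $q$, both of which I would read off from the minimax construction of Section~3 and estimate (\ref{10}).
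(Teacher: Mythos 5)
Your proposal is correct and follows essentially the same route as the paper's proof: the combination $N\widetilde{\F_{q}^{\Lambda}}-\tfrac{\partial}{\partial\theta}\widetilde{\F_{q}^{\Lambda}}$ to cancel $\int_{\R^{N}}G(u_{j})\,dx$ and bound the Gagliardo seminorm, the estimates of $b_{n}$ and of the truncated Kirchhoff terms in the form $(\text{const})+q\,C(\Lambda^{\delta})$ via the support of the cut-off and \eqref{10}, the $G_{1}$--$G_{2}$ splitting with \eqref{8}, \eqref{9} and the Sobolev inequality to recover the $L^{2}$ part, and the choose-$\Lambda_{n}$-first-then-$q_{n}$ conclusion. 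The only cosmetic differences are that you extract the $L^{2}$ bound from the level condition (ii) of Lemma \ref{lem2.3} while the paper uses the $\theta$-derivative condition (iv), and that the paper phrases the final step as a contradiction rather than directly; neither affects the argument.
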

\begin{proof}
Taking into account Lemma \ref{lem2.2} and Lemma \ref{lem2.3} we have
\begin{equation*}
N \widetilde{\F_{q}^{\Lambda}}(\theta_{j}, u_{j}) - \frac{\partial}{\partial \theta} \widetilde{\F_{q}^{\Lambda}}(\theta_{j}, u_{j}) = Nb_{n} +o_{j}(1),
\end{equation*}
which can be written as 
\begin{align}\label{13}
&se^{(N-2s)\theta_{j}} [u_{j}]^{2}_{\h} \nonumber \\
&= q \,\chi \left( \frac{\|u_{j}(\cdot/e^{\theta_{j}})\|_{\h}^{2}}{\Lambda^{2}} \right) \sum_{i=1}^{k}(\alpha_{i}-N)\erre_{i}(u_{j}(\cdot/e^{\theta_{j}}))\nonumber \\
&+ q \,\chi \left( \frac{\|u_{j}(\cdot/e^{\theta_{j}})\|_{\h}^{2}}{\Lambda^{2}} \right) \sum_{i=1}^{k} e^{\alpha_{i}\theta_{j}}\langle \erre_{i}'(e^{\beta_{i}\theta_{j}}u_{j}), \beta_{i}e^{\beta_{i}\theta_{j}}u_{j}\rangle \nonumber \\
&+ q \,\chi' \left( \frac{\|u_{j}(\cdot/e^{\theta_{j}})\|_{\h}^{2}}{\Lambda^{2}} \right) \frac{(N-2s)e^{(N-2s)\theta_{j}}[u_{j}]_{\h}^{2} + Ne^{N\theta_{j}} \|u_{j}\|_{L^{2}(\R^{N})}^{2}}{\Lambda^{2}} \erre(u_{j}(\cdot/e^{\theta_{j}}))\nonumber  \\
&+Nb_{n} +o_{j}(1) \nonumber \\
&=:\mathcal{I}_{j}+\mathcal{II}_{j}+\mathcal{III}_{j}+Nb_{n}+ o_{j}(1).
\end{align}
By the definition of $b_{n}$, if $\gamma \in \Gamma_{n}$, we deduce that
\begin{align}
b_{n}&\leq \max_{\sigma \in \mathcal{D}_{n}} \F_{q}^{\Lambda}(\gamma(\sigma)) \nonumber \\
&\leq \max_{\sigma \in \mathcal{D}_{n}} \left\{\frac{1}{2}[\gamma(\sigma)]^{2}_{\h} -\int_{\R^{N}}G(\gamma(\sigma)) \, dx\right\} + \max_{\sigma \in \mathcal{D}_{n}} \left(q \, \xi_{\Lambda}(\gamma(\sigma)) \erre(\gamma(\sigma)) \right) \nonumber \\
&=: A_{1} + A_{2}(\Lambda). \label{bn}
\end{align}
Now, if $\|\gamma(\sigma)\|_{\h}^{2}\geq 2\Lambda^{2}$ then $A_{2}(\Lambda)=0$, otherwise, by (\ref{10}), we can find $\delta>0$ such that
\begin{equation*}
A_{2}(\Lambda)\leq q \left(C_{1} + C_{2} \|\gamma(\sigma)\|_{\h}^{\delta} \right) \leq q \left( C_{1} + C_{2}' \Lambda^{\delta}\right). 
\end{equation*}
In addition we have the following estimates:
%We can estimate $\mathcal{I}_{j},\mathcal{II}_{j},\mathcal{III}_{j}$ as follows:
\begin{align}
&|\mathcal{I}_{j}| \leq q \left( C_{3} + C_{4} \Lambda^{\delta}\right), \label{nn}\\
&|\mathcal{II}_{j}| \leq C_{5} \, q\, \Lambda^{\delta}, \label{14}\\
&|\mathcal{III}_{j}|\leq q \left( C_{6} + C_{7} \Lambda^{\delta}\right). \label{15}
\end{align}
Putting together (\ref{bn}), (\ref{nn}), (\ref{14}) and (\ref{15}), from (\ref{13}) we obtain
\begin{equation}\label{16}
[u_{j}]_{\h}^{2} \leq C' + q \left( C_{8} + C_{9} \Lambda^{\delta}\right). 
\end{equation}
On the other hand, by $(iv)$ of Lemma \ref{lem2.3} and (\ref{9}), we deduce that 
%since $\displaystyle{\frac{\partial}{\partial \theta} \tilde{\F_{q}^{\Lambda}} (\theta_{j}, u_{j}) = o_{j}(1)}$, by (\ref{9}) we have that
\begin{align}\label{17}
&\frac{(N-2s)e^{(N-2s)\theta_{j}}}{2} [u_{j}]_{\h}^{2} + q \,\chi \left( \frac{\|u_{j}(\cdot/e^{\theta_{j}})\|_{\h}^{2}}{\Lambda^{2}} \right) \sum_{i=1}^{k} \alpha_{i} \erre_{i}(u_{j} (\cdot/e^{\theta_{j}})) \nonumber \\
&+ q \,\chi \left( \frac{\|u_{j}(\cdot/e^{\theta_{j}})\|_{\h}^{2}}{\Lambda^{2}} \right) \sum_{i=1}^{k} e^{\alpha_{i}\theta_{j}}\langle \erre_{i}'(e^{\beta_{i}\theta_{j}}u_{j}), \beta_{i}e^{\beta_{i}\theta_{j}}u_{j}\rangle \nonumber \\
&+ q \,\chi' \left( \frac{\|u_{j}(\cdot/e^{\theta_{j}})\|_{\h}^{2}}{\Lambda^{2}} \right) \frac{(N-2s)e^{(N-2s)\theta_{j}}[u_{j}]_{\h}^{2} + Ne^{N\theta_{j}} \|u_{j}\|_{L^{2}(\R^{N})}^{2}}{\Lambda^{2}} \erre(u_{j}(\cdot/e^{\theta_{j}})) \nonumber \\
&+Ne^{N\theta_{j}} \int_{\R^{N}} G_{2}(u_{j}) \, dx \nonumber \\
&=N e^{N\theta_{j}} \int_{\R^{N}} G_{1}(u_{j}) \, dx + o_{j}(1)\nonumber \\
&\leq Ne^{N\theta_{j}} \left( C_{\varepsilon} \int_{\R^{N}} |u_{j}|^{2^{*}_{s}} \, dx+ \varepsilon \int_{\R^{N}} G_{2}(u_{j}) \, dx\right) +o_{j}(1). 
\end{align}
Then, by using (\ref{8}), (\ref{14}), (\ref{15}), (\ref{16}), (\ref{17}) and Theorem \ref{sobolevthm}, we can infer
\begin{align}\label{18}
&\frac{Ne^{N\theta_{j}} m(1-\varepsilon)}{2} \int_{\R^{N}} u_{j}^{2} \, dx \nonumber \\
&\leq (1-\varepsilon) N e^{N\theta_{j}}\int_{\R^{N}} G_{2}(u_{j})\, dx \nonumber \\
&\leq N e^{N\theta_{j}} C_{\varepsilon} \int_{\R^{N}} |u_{j}|^{2^{*}_{s}} \, dx - q\, \chi \left( \frac{\|u_{j}(\cdot/e^{\theta_{j}})\|_{\h}^{2}}{\Lambda^{2}} \right) \sum_{i=1}^{k} e^{\alpha_{i}\theta_{j}}\langle \erre_{i}'(e^{\beta_{i}\theta_{j}}u_{j}), \beta_{i}e^{\beta_{i}\theta_{j}}u_{j}\rangle \nonumber \\
&- q \,\chi' \left( \frac{\|u_{j}(\cdot/e^{\theta_{j}})\|_{\h}^{2}}{\Lambda^{2}} \right) \frac{(N-2s)e^{(N-2s)\theta_{j}}[u_{j}]_{\h}^{2} + Ne^{N\theta_{j}} \|u_{j}\|_{L^{2}(\R^{N})}^{2}}{\Lambda^{2}} \erre(u_{j}(\cdot/e^{\theta_{j}}))+ o_{j}(1) \nonumber \\
&\leq C_{10} \left( [u_{j}]_{\h}^{2} \right)^{\frac{2^{*}_{s}}{2}} + q \left( C_{11} + C_{12}\Lambda^{\delta} \right) + o_{j}(1) \nonumber \\
&\leq C_{10} \left( C' + q \left( C_{8} + C_{9}\Lambda^{\delta} \right) \right)^{\frac{2^{*}_{s}}{2}} + q \left( C_{11} + C_{12}\Lambda^{\delta} \right) + o_{j}(1). 
\end{align}
Now, we argue by contradiction. If we suppose that there exists no subsequence $\{u_{j}\}_{j\in \N}$ which is uniformly bounded by $\Lambda$ in the $H^{s}$-norm, we can find $j_{0}\in \N$ such that 
\begin{equation}\label{19}
\|u_{j}\|_{\h}>\Lambda \mbox{ for all } j\geq j_{0}. 
\end{equation}
Without any loss of generality, we can assume that (\ref{19}) is true for any $u_{j}$. 
As a consequence, by using (\ref{16}), \eqref{18} and (\ref{19}), we can deduce that
\begin{equation*}
\Lambda^{2} < \|u_{j}\|_{\h}^{2} \leq C_{13} + C_{14} q\, \Lambda^{\frac{2^{*}_{s}}{2}\delta}
\end{equation*}
which is impossible for $\Lambda$ large and $q$ small enough. Indeed, to see this, we can observe that it is possible to find $\Lambda_{0}$ such that $\Lambda_{0}^{2}>C_{13}+1$ and $q_{0}=q_{0}(\Lambda_{0})$ such that $C_{14} q\, \Lambda^{\frac{2^{*}_{s}}{2}\delta}<1$, for any $q<q_{0}$, and this gives a contradiction.

\end{proof}

\noindent
At this point, we prove the following compactness result:

\begin{lem}\label{lem2.6}
Let $n\in \N$, $\Lambda_{n},q_{n}>0$ as in Proposition \ref{prop2.4} and $\{(\theta_{j}, u_{j})\}_{j\in \N} \subset \R \times H^{s}_{\rm rad}(\R^{N})$ be the sequence given in Lemma \ref{lem2.3}. Then $\{u_{j}\}_{j\in \N}$ admits a subsequence which converges in $H^{s}_{\rm rad}(\R^{N})$ to a nontrivial critical point of $\F_{q}$ at the level $b_{n}$. 
\end{lem}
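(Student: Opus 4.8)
The plan is to pass from the augmented sequence $\{(\theta_j,u_j)\}$ to a genuine bounded Palais--Smale sequence for $\F_q$ by rescaling, and then to prove strong convergence in $H^s_{\rm rad}(\R^N)$ directly. Set $v_j:=u_j(\cdot/e^{\theta_j})$. By Proposition \ref{prop2.4} one has $\|u_j\|_{\h}\le\Lambda_n$ (with strict inequality, by the choice of $\Lambda_n$ in the proof of that proposition), and since $\theta_j\to0$ the quantity $\|v_j\|_{\h}^2=e^{(N-2s)\theta_j}[u_j]_{\h}^2+e^{N\theta_j}\|u_j\|_{L^2(\R^N)}^2$ stays below $\Lambda_n^2$ for $j$ large. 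Hence $\chi=1$ and $\chi'=0$ at $v_j$, so $\F_q^{\Lambda}$ and $\F_q$ agree near $v_j$ to first order. Using the scaling identity $\langle(\widetilde{\F_q^{\Lambda}})'(\theta,u),\phi\rangle=\langle(\F_q^{\Lambda})'(u(\cdot/e^\theta)),\phi(\cdot/e^\theta)\rangle$ together with $\theta_j\to0$, properties (ii)--(iii) of Lemma \ref{lem2.3} translate into $\F_q(v_j)\to b_n$ and $\F_q'(v_j)\to0$ in $(H^s_{\rm rad}(\R^N))^{-1}$. Thus $\{v_j\}$ is a bounded Palais--Smale sequence for $\F_q$ at level $b_n$; passing to a subsequence, $v_j\rightharpoonup v$ in $H^s_{\rm rad}(\R^N)$, $v_j\to v$ a.e., and, by Theorem \ref{Lions}, $v_j\to v$ in $L^q(\R^N)$ for all $q\in(2,2^{*}_{s})$.

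The heart of the proof is the strong convergence $v_j\to v$. Testing $\F_q'(v_j)\to0$ against the bounded sequence $v_j-v$ and writing $g=g_1-g_2$, the subcritical part produces $\int_{\R^N}g_1(v_j)(v_j-v)\,dx\to0$ (by \eqref{4}--\eqref{5} and the compactness $v_j\to v$ in $L^q$, $q\in(2,2^{*}_{s})$, via the usual $\varepsilon$-splitting of the growth of $g_1$), so that
\[
[v_j,v_j-v]_{\h}+q\langle\erre'(v_j),v_j-v\rangle+\int_{\R^N}g_2(v_j)(v_j-v)\,dx=o_j(1),
\]
where $[\cdot,\cdot]_{\h}$ denotes the bilinear form of the Gagliardo seminorm. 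I then claim each of the three displayed terms has nonnegative $\liminf$. For the seminorm term this is weak lower semicontinuity (as $[v_j,v]_{\h}\to[v]_{\h}^2$); for the Kirchhoff term it is exactly $(\erre_3)$; for the coercive term I would argue that, since $tg_2(t)\ge mt^2\ge0$ by \eqref{g2} and oddness, Fatou's lemma gives $\int g_2(v)v\le\liminf\int g_2(v_j)v_j$, while a Vitali (uniform integrability) argument based on the $L^2$ and $L^{2^{*}_{s}}$ bounds yields $\int g_2(v_j)v\to\int g_2(v)v$, whence $\liminf\int g_2(v_j)(v_j-v)\ge0$. A sum of three terms with nonnegative $\liminf$ tending to $0$ forces each to vanish. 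From $[v_j,v_j-v]_{\h}\to0$ we get $[v_j]_{\h}\to[v]_{\h}$, and from $\int g_2(v_j)(v_j-v)\to0$ together with $\int g_2(v_j)v\to\int g_2(v)v$ we obtain $\int g_2(v_j)v_j\to\int g_2(v)v$; a further application of Fatou to the nonnegative quantities $g_2(v_j)v_j-mv_j^2$ then gives $\limsup\|v_j\|_{L^2}^2\le\|v\|_{L^2}^2$, and with weak lower semicontinuity $\|v_j\|_{L^2}\to\|v\|_{L^2}$. Since $\h$ is a Hilbert space, convergence of the seminorm and of the $L^2$-norm, together with $v_j\rightharpoonup v$, yields $v_j\to v$ strongly.

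Finally I would transfer and conclude. The scaling by $e^{\theta_j}\to1$ being continuous on $\h$, strong convergence of $v_j$ gives $u_j\to v$ in $H^s_{\rm rad}(\R^N)$; continuity of $\F_q$ and $\F_q'$ (both of class $C^1$ by $(g_1)$, $(g_3)$ and $(\erre_1)$) then yields $\F_q(v)=b_n$ and $\F_q'(v)=0$, so $v$ is a critical point at level $b_n$. Nontriviality follows from Lemma \ref{lem2.2}(1): $b_n\ge\bar b>0$, whereas $\F_q(0)=q\erre(0)=0$ (as $\erre(0)=0$, which follows from $(\erre_4)$ when $\alpha_i>0$, as in the applications), so $v\not\equiv0$.

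The main obstacle is precisely the control of the $L^2$-norm in the strong convergence step: the Gagliardo seminorm does not control $\|\cdot\|_{L^2}$ and the embedding $H^s_{\rm rad}(\R^N)\hookrightarrow L^2(\R^N)$ is not compact, so the recovery of the mass rests entirely on the coercive nonlinearity $g_2$ (through $tg_2(t)\ge mt^2$ and Fatou's lemma) and on the favourable sign supplied by $(\erre_3)$ for the nonlocal term; the remaining pieces are routine once the truncation is seen to be inactive along the sequence.
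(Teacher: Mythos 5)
Your proof is correct, and it reaches the conclusion by a route that differs from the paper's in its technical implementation, though the overall skeleton (bounded sequence $\to$ weak limit $\to$ strong convergence via $(\erre_3)$ and the coercive part $g_2$ $\to$ nontrivial critical point at level $b_n$) is the same. The paper never rescales: it stays with the augmented functional in the variables $(\theta_j,u_j)$, uses $\|u_j\|_{\h}\le\Lambda_n$ to drop the truncation in the derivative, and forms the difference $\langle\widetilde{\F_{q}'}(\theta_j,u_j),u\rangle-\langle\widetilde{\F_{q}'}(\theta_j,u_j),u_j\rangle$; the nonlinear terms are then handled by invoking the two quoted compactness results --- Lemma \ref{prop2.5} (Strauss type) to get $\int_{\R^N} g_i(u_j)u\,dx\to\int_{\R^N} g_i(u)u\,dx$, $i=1,2$, and Lemma \ref{CW} with $P(t)=g_1(t)t$ to get $\int_{\R^N} g_1(u_j)u_j\,dx\to\int_{\R^N} g_1(u)u\,dx$ --- after which $(\erre_3)$ and weak lower semicontinuity give $[u_j]_{\h}\to[u]_{\h}$, and Fatou applied to the decomposition $g_2(t)t=mt^2+h(t)$, $h\ge0$, recovers the $L^2$-norm. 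You instead (a) rescale to $v_j=u_j(\cdot/e^{\theta_j})$ so as to work with a genuine bounded Palais--Smale sequence for the untruncated $\F_q$, (b) test against $v_j-v$ and dispose of the subcritical term by the elementary $\varepsilon$-splitting of $g_1$ combined with Theorem \ref{Lions}, and of $\int_{\R^N} g_2(v_j)v\,dx$ by a Vitali argument, and (c) organize what remains as three quantities with nonnegative $\liminf$ summing to $o_j(1)$, which forces each to vanish; your endgame with Fatou on $g_2(t)t-mt^2\ge 0$ coincides with the paper's. What your version buys is self-containedness (Lemmas \ref{prop2.5} and \ref{CW} are not needed) and a cleaner logical structure; what the paper's buys is that no transfer back from $v_j$ to $u_j$ via continuity of dilations is required, since it never leaves the $(\theta,u)$ framework.

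Two fine points, neither of which I count as a gap since the paper is no more careful. First, for the truncation to be inactive one needs control of $\|u_j(\cdot/e^{\theta_j})\|_{\h}$, not of $\|u_j\|_{\h}$, and pointwise strict inequality $\|u_j\|_{\h}<\Lambda_n$ together with $\theta_j\to0$ would not by itself suffice (the gap could shrink as $j\to\infty$); what is really needed, and what the proof of Proposition \ref{prop2.4} actually supplies through the choice $\Lambda_n^2>C_{13}+1$, is a uniform gap $\limsup_j\|u_j\|^2_{\h}\le C_{13}+1<\Lambda_n^2$. Your parenthetical appeal to ``the choice of $\Lambda_n$'' is therefore the right justification, and it repairs a point the paper itself glosses over in deriving \eqref{vince}. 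Second, your nontriviality argument uses $\F_q(0)=0$, i.e. $\erre(0)=0$; as you note, this follows from $(\erre_4)$ when the exponents $\alpha_i$ are nonzero and holds trivially in the application of Section 4, and the paper makes the same implicit assumption when it concludes from $b_n>0$ that $u$ is nontrivial.
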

\begin{proof}
By Proposition \ref{prop2.4}, we know that $\{u_{j}\}_{j\in \N}$ is bounded, so, by using Theorem \ref{Lions}, we can suppose, up to a subsequence, that there exists $u\in H^{s}_{\rm rad}(\R^{N})$ such that
\begin{align}\begin{split}\label{20}
&u_{j}\rightharpoonup u \, \mbox{ weakly in } H^{s}_{\rm rad}(\R^{N}),\\
&u_{j}\rightarrow u \, \mbox{ in } L^{p}(\R^{N}), \, 2<p<2^{*}_{s},\\
&u_{j}\rightarrow u \, \mbox{ a.e. in } \R^{N}. 
\end{split}\end{align}
By the weak lower semicontinuity we know that
\begin{equation}\label{21}
[u]_{\h}^{2} \leq \liminf_{j\rightarrow \infty} [u_{j}]_{\h}^{2}. 
\end{equation}
Recalling that $\|u_{j}\|_{\h}\leq \Lambda_{n}$ for any $j\in \N$, we can see that, for every $v\in H^{s}_{\rm rad}(\R^{N})$, 
\begin{align}\label{vince}
\langle \widetilde{\F'_{q}}(\theta_{j}, u_{j}), v\rangle &= \langle(\widetilde{\F_{q}^{\Lambda_{n}}})'(\theta_{j}, u_{j}),v\rangle \nonumber \\
&=e^{(N-2s)\theta_{j}} \iint_{\R^{2N}} \frac{u_{j}(x)-u_{j}(y) }{|x-y|^{N+2s}} (v(x)-v(y)) \, dxdy \nonumber \\
&+ q \sum_{i=1}^{k} e^{(\alpha_{i}+\beta_{i})\theta_{j}} \langle \erre_{i}' (e^{\beta_{i}\theta_{j}}u_{j}), v \rangle \nonumber \\
&+e^{N\theta_{j}} \int_{\R^{N}} g_{2}(u_{j})v \, dx - e^{N\theta_{j}}\int_{\R^{N}}g_{1}(u_{j})v \, dx.
\end{align} 
Taking into account (\ref{vince}) and $(iii)$ of Lemma \ref{lem2.3} we have
\begin{align}\label{22}
o_{j}(1)&=\langle \widetilde{\F_{q}'}(\theta_{j}, u_{j}), u\rangle - \langle \widetilde{\F_{q}'}(\theta_{j}, u_{j}), u_{j}\rangle \nonumber  \\
&=e^{(N-2s)\theta_{j}} \iint_{\R^{2N}} \frac{u_{j}(x) -u_{j}(y)}{|x-y|^{N+2s}} [(u(x)-u(y))-(u_{j}(x)-u_{j}(y))] \, dxdy \nonumber \\
&+ q \sum_{i=1}^{k} e^{(\alpha_{i}+\beta_{i})\theta_{j}} \langle \erre_{i}' (e^{\beta_{i}\theta_{j}}u_{j}), u-u_{j}\rangle \nonumber \\
&+e^{N\theta_{j}} \int_{\R^{N}} g_{2}(u_{j})(u-u_{j})\, dx - e^{N\theta_{j}}\int_{\R^{N}}g_{1}(u_{j})(u-u_{j}) \, dx.
\end{align}
Now, by applying the first part of Lemma \ref{prop2.5} for $P(t)=g_{i}(t)$, $i=1,2$, $Q(t)=|t|^{2^{*}_{s} -1}$, $v_{j}= u_{j}$, $v=g_{i}(u)$, $i=1,2$ and $w \in \C^{\infty}_{0}(\R^{N})$, by $(g_3)$, (\ref{5}) and (\ref{20}) we can see, as $j\rightarrow \infty$
\begin{equation*}
\int_{\R^{N}} g_{i}(u_{j})w \, dx \rightarrow \int_{\R^{N}} g_{i}(u) w  \, dx \quad i=1,2, 
\end{equation*}
so we obtain
\begin{equation}\label{23}
\int_{\R^{N}} g_{i}(u_{j})u  \, dx \rightarrow \int_{\R^{N}} g_{i}(u) u  \, dx \quad i=1,2. 
\end{equation}
Taking $X=\h$, $q_{1}=2$, $q_{2}=2^{*}_{s}$, $v_{j}=u_{j}$, $v=g_{1}(u)u$ and $P(t)=g_{1}(t)t$ in Lemma \ref{CW}, by (\ref{4}), (\ref{5}) and (\ref{20}) we deduce
\begin{equation}\label{24}
\int_{\R^{N}} g_{1}(u_{j})u_{j}  \, dx \rightarrow \int_{\R^{N}} g_{1}(u) u  \, dx. 
\end{equation}
On the other hand, (\ref{20}) and Fatou's Lemma yield
\begin{equation}\label{25}
\int_{\R^{N}} g_{2}(u)u  \, dx \leq \liminf_{j\rightarrow \infty} \int_{\R^{N}} g_{2}(u_{j})u_{j}  \, dx.
\end{equation}
Putting together (\ref{22}), (\ref{23}), (\ref{24}), (\ref{25}), and by using ($\erre_3$) we get
\begin{align}\label{26}
\limsup_{j\rightarrow \infty} \, [u_{j}]_{\h}^{2} &= \limsup_{j\rightarrow \infty} e^{(N-2s)\theta_{j}} [u_{j}]_{\h}^{2}\nonumber \\
&= \limsup_{j\rightarrow \infty} \Bigl[ e^{(N-2s)\theta_{j}} \iint_{\R^{2N}} \frac{u_{j}(x) -u_{j}(y)}{|x-y|^{N+2s}} (u(x)-u(y)) \, dxdy \nonumber \\
&+ q \sum_{i=1}^{k} e^{(\alpha_{i}+\beta_{i})\theta_{j}} \langle \erre_{i}'(e^{\beta_{i}\theta_{j}}u_{j}), u-u_{j}\rangle \nonumber \\
&+ e^{N\theta_{j}}\int_{\R^{N}} g_{2}(u_{j})(u-u_{j})  \, dx- e^{N\theta_{j}} \int_{\R^{N}} g_{1}(u_{j})(u-u_{j})  \, dx\Bigr]\nonumber \\
&\leq [u]_{\h}^{2}. 
\end{align}
Therefore (\ref{21}) and (\ref{26}) give
\begin{equation}\label{27}
\lim_{j\rightarrow \infty} [u_{j}]_{\h}^{2}= [u]_{\h}^{2},
\end{equation}
which, in view of (\ref{22}), yields
\begin{equation}\label{28}
\lim_{j\rightarrow \infty} \int_{\R^{N}} g_{2}(u_{j})u_{j}  \, dx = \int_{\R^{N}} g_{2}(u)u  \, dx. 
\end{equation}
Since $g_{2}(t)t= mt^{2}+h(t)$, with $h$ a positive and continuous function, by Fatou's Lemma follows that
\begin{align}
&\int_{\R^{N}} h(u)  \, dx \leq \liminf_{j\rightarrow \infty} \int_{\R^{N}} h(u_{j}) \, dx  \label{t1} \\
&\int_{\R^{N}} u^{2}  \, dx \leq \liminf_{j\rightarrow \infty} \int_{\R^{N}} u_{j}^{2} \, dx. \label{t2}
\end{align}
By using (\ref{28}) and (\ref{t1}) we can see that
\begin{align*}
\limsup_{j\rightarrow \infty} \int_{\R^{N}} mu_{j}^{2} \, dx 
%&\leq \int_{\R^{N}} m u^{2} \, dx + \limsup_{j\rightarrow \infty} \left( \int_{\R^{N}} h(u)\, dx - \int_{\R^{N}} h(u_{j}) \, dx \right)\\
&= \limsup_{j\rightarrow \infty} \int_{\R^{N}} (g_{2}(u_{j})u_{j}- h(u_{j})) \, dx \\
&=\int_{\R^{N}} g_{2}(u)u \, dx+  \limsup_{j\rightarrow \infty} \left(-\int_{\R^{N}} h(u_{j}) \, dx\right) \\
&= \int_{\R^{N}} (mu^{2}+ h(u)) \, dx -\liminf_{j\rightarrow \infty} \int_{\R^{N}} h(u_{j})\, dx\\
&= \int_{\R^{N}} mu^{2} \, dx+\int_{\R^{N}} h(u) \, dx -\liminf_{j\rightarrow \infty} \int_{\R^{N}} h(u_{j})\, dx\\
&\leq \int_{\R^{N}} mu^{2} \, dx
\end{align*}
which, together with (\ref{t2}), implies that $u_{j}\rightarrow u$ strongly in $L^{2}(\R^{N})$. Then, we have proved that $u_{j}\rightarrow u$ strongly in $H^{s}_{\rm rad}(\R^{N})$. Since $b_{n}>0$, $u$ is a nontrivial critical point of $\F_{q}$ at the level $b_{n}$. 

\end{proof}

\noindent
Now, we are ready to prove the main result of this Section:
\begin{proof}[Proof of Theorem \ref{thm1.1}]
Let $h\geq 1$. Since $b_{n}\rightarrow +\infty$ (see {\it (2)} of Lemma \ref{lem2.2}), up to a subsequence, we can consider $b_{1}<b_{2}<\dots<b_{h}$. Then, in view of Lemma \ref{lem2.6}, we define $q(h)=q_{h}>0$ and we get the thesis. 

\end{proof}

\section{Proof of Theorem \ref{thm1.2}}
\noindent
In this Section we give the proof of Theorem \ref{thm1.2}. Let us introduce the following functional
\begin{equation*}
\F_{q}(u)= \frac{1}{2} \left( p+\frac{q}{2} (1-s)[u]_{\h}^{2} \right) [u]_{\h}^{2} -\int_{\R^{N}} G(u) \, dx. 
\end{equation*}
In view of Theorem \ref{thm1.1}, it is enough to verify that 
\begin{equation}\label{R}
\erre(u)=\frac{1-s}{4}[u]_{\h}^4
\end{equation}
satisfies the assumptions ($\erre_1$)-($\erre_5$). \\
Clearly $\erre$ is an even and nonnegative $\C^{1}$-functional in $\h$. Since $[u]^{2}_{\h}\leq \|u\|^{2}_{\h}$, we can see that the assumptions ($\erre_1$) and ($\erre_2$) are satisfied. \\
%It remains to prove conditions ($\erre_3$)-($\erre_5$). 
Regarding ($\erre_3$), suppose that $u_{j}\rightharpoonup u$ weakly in $H^{s}_{\rm rad}(\R^{N})$ and $[u_{j}]_{H^{s}(\R^{N})}^{2}\rightarrow \ell\geq 0$. If $\ell=0$, then we have finished. Let us assume $\ell>0$. From the weak lower semicontinuity, we have
\begin{equation}\label{liminf}
[u]_{\h}^{2} \leq \liminf_{j\rightarrow \infty} [u_{j}]_{\h}^{2}.
\end{equation}
By using the following properties of $\liminf$ and $\limsup$ for sequences of real numbers
\begin{align*}
&\limsup_{j\rightarrow \infty} \, a_{j} b_{j} =a \limsup_{j\rightarrow \infty} \,b_{j} \,\, \mbox{ if } a_{j}\rightarrow a>0, \\
&\limsup_{j\rightarrow \infty} \, (a_{j}+ b_{j})=a+ \limsup_{j\rightarrow \infty} \, b_{j} \,\, \mbox{ if } a_{j}\rightarrow a, \\
&\limsup_{j\rightarrow \infty} \, (-a_{j}) = - \liminf_{j\rightarrow \infty} \, a_{j}, 
\end{align*}
and by applying (\ref{liminf}), we obtain
\begin{align*}
&\limsup_{j\rightarrow \infty} \,\langle \erre'(u_{j}), u-u_{j} \rangle=\\
&=(1-s)\limsup_{j\rightarrow \infty} \left([u_{j}]_{\h}^{2} \iint_{\R^{2N}} \frac{(u_{j}(x)-u_{j}(y))}{|x-y|^{N+2s}} [(u(x) - u(y))-(u_{j}(x)-u_{j}(y))] \, dx\,dy\right)\\
%&= (1-s)\ell \limsup_{j\rightarrow \infty} [u_{j}]_{\h}^{2}  \limsup_{j\rightarrow \infty} \iint_{\R^{2N}} \frac{(u_{j}(x)-u_{j}(y))}{|x-y|^{N+2s}} [(u(x) - u(y))-(u_{j}(x)-u_{j}(y))]  \, dx\,dy\\
&= (1-s)\ell \limsup_{j\rightarrow \infty} \iint_{\R^{2N}} \frac{(u_{j}(x)-u_{j}(y))}{|x-y|^{N+2s}} [(u(x) - u(y))-(u_{j}(x)-u_{j}(y))]  \, dx\,dy\\
%&= (1-s)\ell \limsup_{j\rightarrow \infty} [u_{j}]_{\h}^{2} \left( \lim_{j\rightarrow \infty} \iint_{\R^{2N}} \frac{(u_{j}(x)-u_{j}(y))}{|x-y|^{N+2s}} (u(x)-u(y)) \, dx\,dy -   \liminf_{j\rightarrow \infty} [u_{j}]_{\h}^{2} \right)\\
&=(1-s)\ell \left( \lim_{j\rightarrow \infty} \iint_{\R^{2N}} \frac{(u_{j}(x)-u_{j}(y))}{|x-y|^{N+2s}} (u(x)-u(y)) \, dx\,dy -   \liminf_{j\rightarrow \infty} [u_{j}]_{\h}^{2} \right)\\
%&= (1-s)\ell \limsup_{j\rightarrow \infty} [u_{j}]_{\h}^{2} \left( [u]_{\h}^{2} - \liminf_{j\rightarrow \infty} [u_{j}]_{\h}^{2} \right) \leq 0,
&= (1-s)\ell \left( [u]_{\h}^{2} - \liminf_{j\rightarrow \infty} [u_{j}]_{\h}^{2} \right) \leq 0,
\end{align*}
which gives ($\erre_3$). \\
Now, recalling the definition of $u_{t}$ and by using (\ref{R}), it follows that ($\erre_4$) is verified because of
\begin{align*}
\erre(u_{t})&= \frac{1-s}{4} \left( \iint_{\R^{2N}} \frac{|u(\frac{x}{t})-u(\frac{y}{t})|^{2}}{|x-y|^{N+2s}} \, dx \, dy \right)^{2}\\
&=\frac{(1-s)\,t^{2(N-2s)}}{4}  \left(\iint_{\R^{2N}} \frac{|u(x)-u(y)|^{2}}{|x-y|^{N+2s}} \, dx \, dy \right)^{4}\\
&= t^{2(N-2s)}\erre(u).
\end{align*}
Finally, we prove the condition ($\erre_5$). By using a change of variable, we can see that, for any $g\in \mathcal{O}(N)$  
\begin{equation*}
\erre(u(g \cdot))= \frac{1-s}{4}[u(g \cdot)]_{\h}^{4}= \frac{1-s}{4} [u]_{\h}^{4}=\erre(u). 
\end{equation*}
Then, by applying Theorem \ref{thm1.1}, we can infer that for any $h\in \N$, there exists $q(h)>0$ such that for any $0<q<q(h)$ the functional $\F_{q}$ admits at least $h$ couples of critical points in $H^{s}(\R^{N})$ with radial symmetry. This means that (\ref{K}) admits at least $h$ couples of weak solutions in $H^{s}_{\rm rad}(\R^{N})$.

\addcontentsline{toc}{section}{\refname}

\end{document}